\documentclass{amsart}
\usepackage{amsfonts,amsmath,amssymb,amsthm,cmtiup} 
\usepackage{mathrsfs}
\usepackage{mathtools}
\usepackage{enumerate}

\newtheorem{theorem}{Theorem}
\newtheorem{lemma}[theorem]{Lemma} 
\newtheorem{corollary}[theorem]{Corollary} 
\newtheorem{proposition}[theorem]{Proposition}

\newcommand{\zero}{{\mathbf 0}}
\newcommand{\sd}{\mathop{\scriptstyle\triangle}}
\newcommand{\IP}{\operatorname{IP}}
 
\theoremstyle{definition} 

\newtheorem*{remark*}{Remark}
\newtheorem{definition}{Definition}
\newtheorem{example}{Example}
\newtheorem*{example*}{Example}

\begin{document}

\author{Ol'ga V. Sipacheva} 
\title{Large Sets in Boolean and Non-Boolean Groups \endgraf and Topology}
\thanks{This work was financially supported by the Russian Foundation for Basic Research (Project
No.~15-01-05369).}
\address{Department of General Topology and Geometry, Lomonosov Moscow State University, Leninskie
Gory 1, Moscow 119991, Russia}
\email{o-sipa@yandex.ru}

\maketitle

Various notions of large sets in groups and semigroups naturally arise in dynamics and 
combinatorial number theory. Most familiar are those of syndetic, thick (or 
replete), and piecewise syndetic sets. Apparently, the term ``syndetic'' was introduced 
by Gottschalk and Hedlund in their 1955 book~\cite{GH-syndetic} in the context of topological 
groups, although syndetic sets of integers have been studied long before (they appear, e.g.,  
in Khintchine's 1934 ergodic theorem). During the past decades, large sets in $\mathbb 
Z$ and in abstract semigroups have been extensively studied. It has turned out that, e.g., 
piecewise syndetic sets in $\mathbb N$ have many attractive properties: they are partition regular 
(i.e., given any partition of $\mathbb N$ into finitely many subsets, at least one of the subsets 
is piecewise syndetic), contain arbitrarily long arithmetic progressions, and are 
characterized in terms of ultrafilters on $\mathbb N$ (namely, a set is piecewise syndetic 
it and only if it belongs to an ultrafilter contained in the minimal two-sided ideal 
of $\beta\mathbb N$). Large sets of other kinds are no less interesting, and they have 
numerous applications to dynamics, Ramsey theory, the ultrafilter semigroup on $\mathbb 
N$, the Bohr compactification, and so on. 

Quite recently Reznichenko and the author have found yet another application of large sets. Namely, 
we introduced special large sets in groups, which we called fat, and applied 
them to construct a discrete set with precisely one limit point in any countable nondiscrete  
topological group in which the identity element has nonrapid filter of neighborhoods. Using this 
technique and special features of Boolean groups, we proved, in particular, 
the nonexistence of a countable nondiscrete extremally 
disconnected group in~ZFC (see~\cite{arXiv}). 

In this paper, we study right and left thick, syndetic, piecewise
syndetic, and fat sets in groups (although they can be defined for arbitrary 
semigroups). Our main concern is the 
interplay between such sets in Boolean groups. We also consider natural topologies 
closely related to fat sets, which leads to interesting relations between fat sets and 
ultrafilters. 

\section{Basic Definitions and Notation}

We use the standard notation $\mathbb Z$ for the group of integers, $\mathbb N$ for the set (or 
semigroup, depending on the context) of positive integers, and $\omega$ for the set of nonnegative 
integers or the first infinite cardinal; we identify cardinals with the corresponding 
initial ordinals. Given a set $X$, by $|X|$ we denote its cardinality, by $[X]^k$ for 
$k\in \mathbb N$, the $k$th symmetric power of $X$ (i.e., the set of all $k$-element subsets of 
$X$), and by $[X]^{<\omega}$, the set of all finite subsets of $X$.

\begin{definition}[see \cite{BHM}] 
Let $G$ be a group. A set $A \subset  G$ is said to be
\begin{enumerate}
\item[(a)] 
\emph{right thick}, or simply \emph{thick} if, for every finite $F \subset S$, there exists a 
$g \in G$ (or, equivalently, $g\in A$~\cite[Lemma~2.2]{BHM}) such that $Fg \subset A$; 
\item[(b)] 
\emph{right syndetic}, or simply \emph{syndetic}, if there exists a finite $F \subset G$ such 
that $G = FA$; 
\item[(c)] 
\emph{right piecewise syndetic}, or simply \emph{piecewise syndetic}, if there exists a 
finite $F \subset G$ such that $FA$ is thick. 
\end{enumerate} 
\end{definition}

Left thick, left syndetic, and left piecewise syndetic sets are defined by analogy;  
in what follows, we consider only right versions and omit the word ``right.''

\begin{definition}
Given a subset $A$ of a group $G$, we shall refer to the least cardinality of a set 
$F \subset G$ for which $G=FA$ as the \emph{syndeticity index}, or simply \emph{index} (by analogy 
with subgroups) of $A$ in $G$. Thus, a set is syndetic if and only if it is of finite index. 
We also define the \emph{thickness index} of $A$ as the least cardinality of $F\subset G$ for which 
$FA$ is thick. 
\end{definition}

A set $A\subset\mathbb Z$ is syndetic if and only if the gaps between neighboring elements of $A$ 
are bounded, and $B\subset \mathbb Z$ is thick if and only if it contains arbitrarily long 
intervals of consecutive integers. The intersection of any such sets $A$ and $B$ is piecewise 
syndetic; clearly, such a set is not necessarily syndetic or thick (although it may as well be both 
syndetic and thick). The simplest general example of a syndetic set in a group 
is a coset of a finite-index subgroup. 

In what follows, when dealing with general groups, we use multiplicative notation, 
and when dealing with Abelian ones, we use additive notation. 

Given a set $A$ in a group $G$, by $\langle A\rangle$ we denote the subgroup of $G$ generated 
by~$A$.

As mentioned, we are particularly interested in Boolean groups, i.e., groups in which all elements 
are self-inverse. All such groups are Abelian. 
Moreover, any Boolean group $G$ can be treated as a countable-dimensional 
vector space over the two-element field $\mathbb Z_2$; 
therefore, for some set $X$ (basis), $G$ can be represented 
as the free Boolean group $B(X)$ on $X$, i.e., as $[X]^{<\omega}$ with zero 
$\varnothing$, which we 
denote by $\zero$, and the 
operation of symmetric difference, or  Boolean sum, which we denote by $\sd$: 
$A\sd B = (A\cup B)\setminus A\cap B$. The elements of $B(X)$ (i.e., 
finite subsets of $X$) are called \emph{words}. The \emph{length} of a 
word equals its cardinality. The basis $X$ is embedded in $B(X)$ as the set of 
words of length~1.  Given $n\in \omega$, 
we use the standard notation $B_n(X)$ for the set of words of 
length at most $n$; thus, $B_0(X)=\{\zero\}$, $B_1(X) = X\cup \{0\}$, and 
$B(X)=\bigcup_{n\in \omega} B_n(X)$. For the set of words of length precisely $n$, 
where $n\in \mathbb N$, 
we use the notation $B_{=n}(X)$; we have $B_{=n}(X)=B_n(X)\setminus B_{n-1}(X)$. 

Any free filter $\mathscr F$ on an infinite set $X$ determines a topological 
space $X_{\mathscr F}=X\cup \{*\}$ with one nonisolated point $*$; the neighborhoods of this point 
are $A\cup \{*\}$ for $A\in \mathscr F$. The topology of the free Boolean topological group 
$B(X_{\mathscr F})=[X\cup \{*\}]^{<\omega}$ on this space, 
that is, the strongest group topology that induces 
the topology of $X_{\mathscr F}$ on $X\cup \{*\}$, is described in detail in~\cite{axioms}. One of 
the possible descriptions is as follows. For each $n\in \mathbb N$, we fix an arbitrary
sequence of neighborhoods~$V_n$ of $*$, that is, of $A_n\cup\{*\}$, where $A_n\in \mathcal F$, 
and set 
\begin{align*}
U(V_n) &= \{x\sd y\colon x,y\in V_n\}\qquad \text{for $n\in \mathbb N$}\\
\intertext{and}
U\bigl((V_n)_{n\in \mathbb N}\bigr) &=
\bigcup_{n\in \mathbb N} (U(V_1) \sd U(V_2) \sd\dots \sd U(V_n))\\
&= 
\bigcup_{n\in \mathbb N}\{x_1\sd y_1\sd  \dots \sd x_n\sd y_n : 
x_i,y_i\in A_i\text{ for }i\le n\}.
\end{align*}
In particular, the  subgroup generated by $(A\cup \{*\})\sd (A\cup \{*\})$ is a neighborhood 
of zero for any $A\in \mathscr F$.
Clearly, for  $n\in \omega$, 
a set $Y\subset B_{=2n}(X_{\mathscr F})$ is a trace on $B_{=2n}(X_{\mathscr F})$ 
of a neighborhood of zero in $B(X_{\mathscr F})$ 
if and only if it contains a set of the form 
$\bigl(\underbrace{(A\cup\{*\})\sd \dots\sd (A\cup\{*\})}_{\text{$2n$ times}}\bigr)
\cap B_{=2n}(X_{\mathscr F})=[A\cup\{*\}]^{2n}$, 
and a set\footnote{Recall that $X\subset X_{\mathscr F}=X\cup \{*\}$, and, therefore, 
$B(X)$ (without topology) is naturally embedded in $B(X_{\mathscr F})$ as a subgroup.} 
$Y\subset B_{=2n}(X) \subset B_{=2n}(X_{\mathscr F})$ 
is a trace on $B_{=2n}(X)$ of a neighborhood of zero in $B(X_{\mathscr F})$ 
if and only if it contains a set of the form 
$\bigl(\underbrace{A\sd \dots \sd A}_{\text{$2n$ times}}\bigr)\cap B_{=2n}(X)= 
[A]^{2n}$. The intersection of a neighborhood of zero with $B_{=k}(X_{\mathscr F})$ may be empty 
for all odd~$k$.

In what follows, we deal with rapid, $\kappa$-arrow, and Ramsey filters and ultrafilters. 

\begin{definition}[\cite{Mokobodzki}]
A filter $\mathscr F$ on $\omega$ is said to be \emph{rapid} if every function 
$\omega\to\omega$ is majorized by the increasing enumeration of some element of $\mathscr F$. 
\end{definition}

Clearly, any filter containing a rapid filter is rapid as well; thus, the existence of rapid 
filters is equivalent to that of rapid ultrafilters. Rapid ultrafilters are also known as 
semi-$Q$-point, or weak $Q$-point, ultrafilters. 
Both the existence and nonexistence of rapid ultrafilters is 
consistent with ZFC (see, e.g., \cite{Mathias} and \cite{Miller}). 

The notions  of $\kappa$-arrow and Ramsey filters are closely related to Ramsey theory, 
more specifically, to the notion of homogeneity with respect to a coloring, or partition. 
Given a set $X$ and positive integers $m$ and $n$, by an \emph{$m$-coloring} of $[X]^n$ we mean 
any map $c\colon X\to Y$ of $X$ to a set $Y$ of cardinality $m$. Any such coloring determines 
a partition of $X$ into $m$ disjoint pieces, each of which is assigned a color $y\in Y$. 
A set $A\subset X$ is said to be \emph{homogeneous} with respect to $c$, or \emph{$c$-homogeneous}, 
if $c$ is constant on $[A]^n$. The celebrated Ramsey theorem (finite version) asserts that, 
given any positive integers $k$, $l$, and $m$, there exists a positive integer $N$ such that, 
for any $k$-coloring $c\colon [X]^l\to Y$, where $|X|\ge N$ and $|Y|=k$, there exists a 
$c$-homogeneous set $A\subset X$ of size $m$.

We consider $\kappa$-arrow and Ramsey filters on any, not necessarily countable, infinite sets. 
For convenience, we require these filters to be uniform, i.e., nondegenerate in the sense that 
all of their elements have the same cardinality (equal to that of the underlying set). 

\begin{definition}
Let $\kappa$ be an infinite cardinal, and let $\mathscr F$ be a uniform filter
on a set $X$ of cardinality  $\kappa$. 
\begin{enumerate}[{\rm (i)}]
\item
We say that $\mathscr F$ is a \emph{Ramsey} filter if, for 
any 2-coloring $c\colon [X]^2\to \{0,1\}$, there exists a $c$-homogeneous 
set $A\in \mathscr U$. 
 \item
Given an arbitrary cardinal $\lambda\le \kappa$, we say that 
$\mathscr F$ is a \emph{$\lambda$-arrow} filter if, 
for any 2-coloring $c\colon [X]^2\to \{0, 1\}$, there exists 
either a set $A\in \mathscr F$ such that $c([A]^2)=\{0\}$ or a set  $S\subset X$ with 
$|S|\ge \lambda$ such that $c([S]^2)=\{1\}$.
\end{enumerate}
\end{definition} 

Any filter $\mathscr F$ on $X$ which is Ramsey or $\lambda$-arrow for $\lambda\ge 3$ 
is an ultrafilter. Indeed, let $S\subset X$ and consider the coloring 
$c\colon [X]^2 \to \{0, 1\}$ defined by
$$
c(\{x, y\})=\begin{cases}0&\text{if $x,y\in S$ or $x, y\in X\setminus S$,}\\ 
1&\text{otherwise}.
\end{cases}
$$
Clearly, any $c$-homogeneous set containing more than two points is contained entirely 
in $S$ or in $X \setminus S$; therefore, either $S$ or $X\setminus S$ 
belongs to $\mathscr F$, so that $\mathscr F$ is an ultrafilter.

According to Theorem~9.6 in~\cite{Comfort-Negrepontis}, 
if $\mathscr U$ is a Ramsey ultrafilter on $X$, then, for any $n < \omega$ and any 
2-coloring $c\colon [X]^n\to \{0,1\}$, there exists a $c$-homogeneous set 
$A\in \mathscr U$. 

It is easy to see that if $\mathscr F$ is $\lambda$-arrow, then, for any $A\in \mathscr F$ and 
any $c\colon [A]^2\to \{0, 1\}$, there exists either a set $B\in \mathscr F$ such that  
$B\subset A$ and $c([B]^2)=\{0\}$ or a set  $S\subset A$ with $|S|\ge \lambda$ 
such that $c([S]^2)=\{1\}$.

In~\cite{Baumgartner-Taylor}, where $k$-arrow ultrafilters for finite $k$ were introduced, 
it was shown that the existence of a $3$-arrow (ultra)filter on $\omega$ implies 
that of a $P$-point ultrafilter; therefore, the nonexistence of 
$\kappa$-arrow ultrafilters for any $\kappa\ge 3$ is consistent with ZFC~(see \cite{Shelah}).

On the other hand, the continuum hypothesis implies the 
existence of $k$-arrow ultrafilters on $\omega$ for any $k\le \omega$. To formulate a 
more delicate assumption under which $k$-arrow ultrafilters exist, we need more 
definitions. Given a uniform filter $\mathscr F$ on $\omega$, a set $B \subset \omega$ is called a 
\emph{pseudointersection} of $\mathscr F$ if the complement $A\setminus B$ is finite for all
$A \in \mathscr F$. The \emph{pseudointersection number} $\mathfrak p$ 
is the smallest size of a uniform filter on $\omega$ 
which has no infinite pseudointersection. It is easy to 
show that $\omega_1\le \mathfrak p\le 2^\omega$, so that, 
under the continuum hypothesis, $\mathfrak p = 2^\omega$.  It is also consistent with ZFC 
that, for any regular cardinals $\kappa$ and $\lambda$ such that $\omega_1\le \kappa\le \lambda$, 
$2^\omega = \lambda$ and $\mathfrak p = \kappa$  (see~\cite[Theorem~5.1]{Handbook}). 
It was proved in~\cite{Baumgartner-Taylor} that, under the assumption $\mathfrak p =2^\omega$ 
 (which is referred to as P(c) in \cite{Baumgartner-Taylor}), there exist $\kappa$-arrow 
ultrafilters on $\omega$ for all $\kappa\le \omega$. Moreover, for each $k\in \mathbb N$, 
there exists a $k$-arrow ultrafilter on $\omega$ 
which is not $(k+1)$-arrow, and there exists an ultrafilter which is $k$-arrow for each 
$k\in \mathbb N$ but is not Ramsey and hence not 
$\omega$-arrow~\cite[Theorems~2.1 and~4.10]{Baumgartner-Taylor}. 

In addition to the free group topology of Boolean groups on spaces generated by filters, 
we consider the \emph{Bohr topology} on arbitrary abstract and topological groups. 
This is the weakest group topology with respect to which 
all homomorphisms to compact topological groups 
are continuous, or the strongest totally bounded group topology; the Bohr topology on 
an abstract group (without topology) is defined as the Bohr topology 
on this group endowed with the discrete topology. 

Finally, we need the definition of a minimal dynamical system. 

\begin{definition}
Let $G$ be a monoid with identity element $e$. 
A pair $(X, (T_g)_{g\in G})$, where $X$ is a topological space and $(T_g)_{g\in G}$ is a family of 
continuous maps $X\to X$ such that $T_e$ is the identity map and $T_{gh}= T_g\circ T_h$ for any 
$g, h\in G$, is called a \emph{topological dynamical system}. Such a system is said 
to be \emph{minimal} if no proper closed subset of $X$ is $T_g$-invariant for all $g\in G$. 
\end{definition}

We sometimes identify sequences with their ranges. 

All groups considered in this paper are assumed to be infinite, and all filters are assumed 
to have empty intersection, i.e., to contain the Fr\'echet filter 
of all cofinite subsets (and hence be free).  

\section{Properties of Large Sets}
\label{ss1}
We begin with well-known general properties of large sets defined above. Let $G$ be a group.  

\textbf{1.~}A set $A\subset G$ is thick if and only if the family $\{g A: g\in G\}$ 
of all translates of 
$A$ has the finite intersection property. 

Indeed, this property means that, for every finite subset 
$F$ of $G$, there exists an $h\in \bigcap_{g\in F} g^{-1} A$, and this, in turn, means that 
$gh \in A$ for each $g\in F$, i.e., $Fh \subset A$. 

\smallskip

\textbf{2~\cite[Theorem~2.4]{BHM}.}\enspace A set $A$ is syndetic 
if and only if $A$ intersects every thick set nontrivially, or, equivalently, if its complement 
$G\setminus A$ is not thick.

\smallskip

\textbf{3.}\enspace A set $A$ is thick 
if and only if $A$ intersects every syndetic set nontrivially, or, equivalently, if its 
complement $G\setminus A$ is not syndetic. 

\textbf{4~\cite[Theorem~2.4]{BHM}.}\enspace 
A set $A$ is piecewise syndetic if and only if there exists a syndetic set $B$
and a thick set $C$ such that $A = B \cap C$.

\smallskip

\textbf{5~\cite[Theorem~4.48]{Hindman-Strauss}.}\enspace
A set $A$ is thick if and only if 
$$
\overline A^{\beta G} = \{p\in \beta G: A\in p\}
$$
(the closure of $A$ in the Stone--\v Cech compactification $\beta G$ of $G$ with the discrete 
topology) contains a left ideal of the semigroup $\beta G$.

\textbf{6~\cite[Theorem~4.48]{Hindman-Strauss}.}\enspace 
A set $A$ is syndetic if and only if every left ideal of $\beta G$ intersects $\overline A^{\beta 
G}$. 

\textbf{7.}\enspace The families of thick, syndetic, and piecewise syndetic sets are closed with 
respect to taking supersets. 

\textbf{8.}\enspace Thickness, syndeticity, and piecewise syndeticity are translation invariant. 

\textbf{9~\cite[Theorem~2.5]{BHM}.}\enspace Piecewise syndeticity 
is \emph{partition regular}, i.e., whenever a 
piecewise syndetic set is partitioned into finitely many subsets, one of these subsets is piecewise 
syndetic. 

\textbf{10~\cite[Theorem~2.4]{BHM}.}\enspace For any thick set $A\subset G$, there exists 
an infinite sequence $B=(b_n)_{n\in \mathbb N}$ in $G$ such that 
$$ 
\operatorname{FP}(B)=\{x_{n_1}x_{n_2}\dots x_{n_k}:k, n_1, n_2, \dots, n_k\in \mathbb N,\ 
n_1<n_2<\dots < n_k\}
$$
is contained in $A$.

\textbf{11.}\enspace Any $\IP^*$-set in $G$, i.e., a set intersecting any infinite set of the 
form $\operatorname{FP}(B)$, is syndetic. This immediately follows from properties~2 and~10. 

\section{Fat Sets}
\label{ss2}

As mentioned at the beginning of this section, in~\cite{arXiv}, Reznichenko and the author 
introduced a new\footnote{Later, we have found out that 
similar subsets of $\mathbb Z$ had already been used in~\cite{BFW}: 
the $\Delta^*_n$-sets considered there and $n$-fat 
subsets of $\mathbb Z$ are very much alike.} class of large sets, which we called fat; they have played the key role in our construction of 
nonclosed discrete subsets in topological groups. 

\begin{definition}
We say that a subset $A$ of a group $G$ is \emph{fat} in $G$ 
if there exists a positive integer $m$ such that any $m$-element set $F$ in  
$G$ contains a two-element subset $D$ for which $D^{-1}D\subset A$. 
The least number $m$ with this property is called the \emph{fatness} of $A$. 

We shall refer to fat sets of fatness $m$ as \emph{$m$-fat} sets.
\end{definition}

In a similar manner, $\kappa$-fat sets for any cardinal $\kappa$ can defined. 

\begin{definition}
Given a cardinal $\kappa$, we say that a subset $A$ of a group $G$ is \emph{$\kappa$-fat} in $G$ 
if any set $S\subset G$ with $|S|=\kappa$  contains a two-element subset $D$ for which $D^{-1}D\subset A$. 
\end{definition}

The notions of an $\omega$-fat and a $k$-fat set are very similar to but 
different from those of $\Delta^*$- and $\Delta_k^*$-sets. $\Delta^*$-Sets 
were introduced and studied in \cite{BHM} for  arbitrary semigroups, and 
$\Delta^*_k$-sets with $k\in \mathbb N$ were  defined in~\cite{BFW}
for the case of $\mathbb Z$. 

\begin{definition}
Given a finite of countable cardinal $\kappa$ and a sequence $(g_n)_{n\in\kappa}$ 
in a group $G$, we set 
$$
\Delta\bigl((g_n)_{n\in\kappa}\bigr) = \{x \in G : \text{there exist $m < n<\kappa$ 
such that $x = g_m^{-1}g_n$}\}
$$  
and 
$$
\Delta_D \bigl((g_n)_{n\in\kappa}\bigr) = 
\{x \in G: \text{there exist $m < n<\kappa$ such that $x 
= g_ng_m^{-1}$}\}. 
$$

A subset of a group $G$ is called a \emph{right} (\emph{left}) \emph{$\Delta_{\kappa}^*$-set} 
if it intersects $\Delta_I\bigl((g_n)_{n\in\kappa}\bigr)$ (respectively, 
$\Delta_D\bigl((g_n)_{n\in\kappa}\bigr)$) for any one-to-one sequence 
$(g_n)_{n\in\kappa}$ in $G$. $\Delta^*_\omega$-sets are 
referred to as \emph{$\Delta^*$-sets}. 
\end{definition}

\begin{remark*}
For any one-to-one sequence $S=(g_n)_{n\in \kappa}$ in a Boolean group with zero 
$\zero$, we have $\Delta_I(S) = \Delta_D(S) = (S\sd S)\setminus \{\zero\}$. 
Hence any $\kappa$-fat set in such a group is a right and left $\Delta_{\kappa}^*$-set. 
Moreover, the only difference between 
$\Delta_{\kappa}^*$- and $\kappa$-fat sets 
in a Boolean group is in that the latter must contain~$\zero$.  
\end{remark*}

The most obvious feature distinguishing fatness among other notions of largeness is symmetry 
(fatness has no natural right and left versions). In return, translation invariance is sacrificed. 
Thus, in studying fat sets, it makes sense to consider also their translates. 

Clearly, a $2$-fat set in a group must coincide with this group. The simplest nontrivial example 
of a fat set is a subgroup of finite index $n$; its fatness equals $n+1$ (any $(n+1)$-element 
subset has two elements $x$ and $y$ in the same coset, and both $x^{-1}y$ and $y^{-1}x$ belong 
to the subgroup).  

It seems natural to refine the definition of fat sets by requiring $A \cap F^{-1}F$ to be of 
prescribed size rather than merely nontrivial. However, this (and even 
a formally stronger) requirement does not introduce anything new. 

\begin{proposition}[{\cite[Proposition~1.1]{arXiv}}]
\label{fat Ramsey}
For any fat set $A$ in a group $G$  and any  positive 
integer~$n$, there exists a positive integer $m$ such that any $m$-element set $F$ in $G$ 
contains an $n$-element subset $F'$ for which 
$F'^{-1}F'\subset A$. 
\end{proposition}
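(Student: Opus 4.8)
The plan is to bootstrap from the definition of fatness (the case $n=2$) to arbitrary $n$ by an iterated application of the finite Ramsey theorem. Fix the fat set $A$ with fatness $k$, so that every $k$-element subset of $G$ contains a two-element subset $D$ with $D^{-1}D\subset A$. Given $n$, I want to produce $m=m(k,n)$ so that every $m$-element $F\subset G$ contains an $n$-element $F'$ with $F'^{-1}F'\subset A$. The natural intermediate object is a $2$-coloring of $[F]^2$ that records, for each pair $\{x,y\}\subset F$, whether both $x^{-1}y\in A$ and $y^{-1}x\in A$; call such a pair ``good.'' A subset $F'\subset F$ that is homogeneous in the ``good'' color has exactly the property $F'^{-1}F'\subset A$ (the diagonal elements $x^{-1}x=e$ lie in $A$ automatically, since any singleton sits inside some $k$-set and $A$ contains $D^{-1}D\ni e$, hence $e\in A$ provided $G$ is infinite; if one prefers, one notes $e\in D^{-1}D$ for the witnessing $D$).

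The key step is then: by the finite Ramsey theorem, choose $m$ large enough that any $2$-coloring of $[F]^2$ with $|F|=m$ admits a homogeneous subset of size $N$, where $N$ is to be specified. If the homogeneous subset $H$ has the ``good'' color, we are done as soon as $N\ge n$. The only remaining worry is that $H$ might be homogeneous in the ``bad'' color, i.e.\ every pair $\{x,y\}\subset H$ fails to be good, meaning $x^{-1}y\notin A$ or $y^{-1}x\notin A$. But this is impossible once $N\ge k$: applying the fatness of $A$ to any $k$-element subset of $H$ yields a two-element $D\subset H$ with $D^{-1}D\subset A$, i.e.\ a good pair inside $H$, contradicting bad-homogeneity. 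Hence it suffices to take $N=\max\{n,k\}$, and then $m$ is the corresponding Ramsey number $R(N;2)$ for $2$-colorings of pairs.

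So the proof reduces to: (1) set up the ``good pair'' coloring $c\colon[F]^2\to\{0,1\}$; (2) invoke finite Ramsey to get, for $m$ suitably large, a $c$-homogeneous $H\subset F$ with $|H|=\max\{n,k\}$; (3) rule out the bad color using fatness with $k=$ fatness of $A$; (4) conclude that $H$ is good-homogeneous and any $n$-element subset $F'\subset H$ satisfies $F'^{-1}F'\subset A$. I do not expect a genuine obstacle here; the one point requiring a line of care is the verification that good-homogeneity of $H$ really gives $F'^{-1}F'\subset A$ for all of $F'$, including making sure the identity element is handled (it is, since $e\in A$), and that the coloring is symmetric in $x,y$ as required for a coloring of $[F]^2$ — which it is, because goodness of $\{x,y\}$ was defined as the conjunction of $x^{-1}y\in A$ and $y^{-1}x\in A$, a symmetric condition. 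The whole argument is the standard ``fatness is a Ramsey-type local property, so it upgrades for free,'' and the statement is essentially a restatement of that principle.
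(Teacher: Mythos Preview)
Your proof is correct and follows essentially the same route as the paper: define the $2$-coloring of pairs by $c(\{x,y\})=1 \iff x^{-1}y,y^{-1}x\in A$, apply the finite Ramsey theorem to obtain a homogeneous set of size $\max\{n,k\}$, and use $k$-fatness to rule out the bad color. The paper phrases the choice of $\max\{n,k\}$ as ``we may assume $n$ is at least the fatness of $A$,'' but the argument is identical.
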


Indeed, considering the coloring $c\colon [G]^2\to \{0, 1\}$ defined by 
$c(\{x,y\})=1 \iff x^{-1}y, y^{-1}x\in A$ and applying the finite Ramsey theorem, we find 
a $c$-homogeneous set $F$ of size $n$ (provided that $m$ is large enough). If $n$ is no smaller 
than the fatness of $A$ (which we can assume without loss of generality), then $c([F]^2)=\{1\}$.

There is yet another important distinguishing feature of fat sets, namely, the finite intersection 
property. Neither thick, syndetic, nor piecewise syndetic sets have this property. 
(Indeed, the disjoint sets of even and odd numbers are syndetic in $\mathbb Z$, 
and $\bigcup_{i\ge0} [2^{2i}, 2^{2i+1}) \cap \mathbb Z$ and $\bigcup_{i\ge 1} [2^{2i-1},2^i)$ 
are thick.) The following theorem is valid. 

\begin{theorem}[\cite{arXiv}]  
\label{fat filter}
Let $G$ be a group. 
\begin{enumerate}[\rm (i)]
\item
If $A\subset G$ is fat, then so is $A^{-1}$\textup.
\item
If $A\subset B\subset G$ and $A$ is fat, then so is $B$\textup.
\item
If $A\subset G$ and $B\subset G$ are fat, then so is $A\cap B$. 
\end{enumerate} 
\end{theorem}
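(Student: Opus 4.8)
The plan is to dispose of (i) and (ii) by direct inspection and to reduce (iii) to the Ramsey-type refinement in Proposition~\ref{fat Ramsey}. For (i), the observation is that for any two-element set $D=\{x,y\}$ the set $D^{-1}D=\{e,\,x^{-1}y,\,y^{-1}x\}$ is symmetric, i.e.\ $(D^{-1}D)^{-1}=D^{-1}D$, because $(x^{-1}y)^{-1}=y^{-1}x$. Hence if a positive integer $m$ witnesses the fatness of $A$ and $F$ is any $m$-element subset of $G$, then the two-element $D\subset F$ with $D^{-1}D\subset A$ also satisfies $D^{-1}D=(D^{-1}D)^{-1}\subset A^{-1}$, so the same $m$ witnesses the fatness of $A^{-1}$. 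Part (ii) is even simpler: if $m$ witnesses the fatness of $A$, then for every $m$-element $F$ the two-element $D\subset F$ with $D^{-1}D\subset A\subset B$ witnesses what is needed, so $m$ works for $B$ as well.

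For (iii), write $m_B$ for the fatness of $B$, and suppose first that $m_B\ge 3$ (the case $m_B=2$ forces $B=G$, so that $A\cap B=A$ and there is nothing to prove). First I would apply Proposition~\ref{fat Ramsey} to the fat set $A$ with $n:=m_B$: this produces a positive integer $m$ such that every $m$-element set $F\subset G$ contains an $m_B$-element subset $F'$ with $F'^{-1}F'\subset A$. Since $F'$ has exactly $m_B$ elements, fatness of $B$ supplies a two-element $D\subset F'$ with $D^{-1}D\subset B$; and because $D\subset F'$ we also have $D^{-1}D\subset F'^{-1}F'\subset A$. Therefore $D^{-1}D\subset A\cap B$, and $m$ witnesses the fatness of $A\cap B$.

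The step I expect to be the crux --- and the reason the bare definition of fatness does not suffice on its own for (iii) --- is the passage from ``$F$ contains a \emph{two-element} set $D$ with $D^{-1}D\subset A$'' to ``$F$ contains an \emph{$m_B$-element} set $F'$ with $F'^{-1}F'\subset A$.'' To invoke the fatness of $B$ one needs a piece of $F$ that is simultaneously an ``$A$-difference set'' and large enough (of size $m_B$) to serve as input to $B$'s fatness, and obtaining such a piece is exactly what Proposition~\ref{fat Ramsey} provides (itself a consequence of the finite Ramsey theorem). Once that refinement is in hand, (iii) is immediate, and, together with (ii), it shows that the fat subsets of $G$ form a filter base.
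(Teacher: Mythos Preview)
Your proof is correct and follows exactly the approach indicated in the paper: assertions~(i) and~(ii) are dealt with by direct inspection, and~(iii) is deduced from Proposition~\ref{fat Ramsey}. You have simply filled in the details that the paper omits; in particular, your use of Proposition~\ref{fat Ramsey} with $n=m_B$ to produce an $m_B$-element $F'\subset F$ with $F'^{-1}F'\subset A$, and then invoking the fatness of $B$ on $F'$, is precisely the intended argument.
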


Assertions (i) and (ii) are obvious, and (iii) follows from Proposition~\ref{fat Ramsey}.

\begin{proposition}
\label{3-fat cover}
If $G$ is a group, $S\subset G$, and $S\cap (SS\cup S^{-1}S^{-1})=\varnothing$, 
then $G\setminus S$ is $3$-fat. 
\end{proposition}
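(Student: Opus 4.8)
The plan is to unwind the definition of $3$-fatness directly. Writing $A:=G\setminus S$, I must show that every three-element set $F=\{x,y,z\}\subset G$ contains a two-element subset $D$ with $D^{-1}D\subseteq A$. Since $D^{-1}D=\{e\}\cup\{p^{-1}q,\ q^{-1}p\}$ for $D=\{p,q\}$, and since $e\notin S$ (otherwise $e=e\cdot e\in S\cap SS$, against the hypothesis), this reduces to finding a pair $\{p,q\}\subseteq F$ with $p^{-1}q\notin S\cup S^{-1}$. Put $T:=S\cup S^{-1}$. The only consequence of the hypothesis I will need is that $T\cap SS=\varnothing$: indeed $S\cap SS=\varnothing$ is assumed outright, and taking inverses in $S\cap S^{-1}S^{-1}=\varnothing$ gives $S^{-1}\cap SS=\varnothing$.

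Next I argue by contradiction: suppose $p^{-1}q\in T$ for all distinct $p,q\in F$. I would then orient each edge of the triangle on $\{x,y,z\}$ by drawing an arrow $p\to q$ whenever $p^{-1}q\in S$; since $p^{-1}q\in T=S\cup S^{-1}$, at least one of the two orientations occurs on each edge, so every edge carries an arrow. Now I invoke the elementary fact that any such orientation of a triangle contains a directed path of length two, say $p\to q\to r$: if it did not, every vertex would be a pure source or a pure sink, but among three vertices two must be of the same type, and the edge joining them could then carry no arrow — a contradiction.

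Given such a path, I have $p^{-1}q\in S$ and $q^{-1}r\in S$, hence $p^{-1}r=(p^{-1}q)(q^{-1}r)\in SS$; on the other hand the edge $\{p,r\}$ carries an arrow, so $p^{-1}r\in S$ or $r^{-1}p\in S$, i.e.\ $p^{-1}r\in T$, contradicting $T\cap SS=\varnothing$. Therefore $F$ must contain a pair $\{p,q\}$ with $p^{-1}q\notin T$, and for that pair $D^{-1}D=\{e,p^{-1}q,q^{-1}p\}\subseteq G\setminus S$; hence $A=G\setminus S$ is $3$-fat.

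The one step that genuinely needs care is the "mixed" configuration, where one pair of $F$ contributes an element of $S$ while another contributes an element of $S^{-1}$: a naive case analysis over which vertex of $F$ to treat as the ``middle'' term becomes awkward there, since $SS^{-1}$ and $S^{-1}S$ are not controlled by the hypothesis. Passing to the oriented-triangle picture is precisely what removes this difficulty — the directed path of length two picks out the correct pair automatically and handles all the cases uniformly.
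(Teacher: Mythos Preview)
Your proof is correct and follows essentially the same approach as the paper's: both argue by contradiction, reduce to the three quotients $a^{-1}b$, $b^{-1}c$, $c^{-1}a$ lying in $S\cup S^{-1}$, and then use a pigeonhole step (the paper phrases it as ``two of the three signs $\varepsilon,\delta,\gamma$ coincide'', you phrase it as ``some vertex of the oriented triangle has an incoming and an outgoing arrow'') together with the cocycle identity to produce an element of $T\cap SS$. The oriented-triangle language is a pleasant repackaging, but the underlying argument is identical.
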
 

\begin{proof}
Take any three different elements $a, b, c\in G$. We must show that the identity element $e$ 
belongs to $G\setminus S$ (which is true by assumption) and  either 
$(a^{-1}b)^{\pm1}\in G\setminus S$, 
$(b^{-1}c)^{\pm1}\in G\setminus S$, or $(c^{-1}a)^{\pm1}\in G\setminus S$. 
Assume that, on the contrary, $(a^{-1}b)^{\varepsilon}\in S$ (i.e., 
$a^{-1}b\in S^{\varepsilon}$),
$b^{-1}c\in S^{\delta}$, and $c^{-1}a\in S^{\gamma}$ for some $\epsilon, \delta, \gamma\in 
\{-1, 1\}$. At least two of the three numbers $\varepsilon$, $\delta$, and $\gamma$ are equal. 
Suppose for definiteness that $\varepsilon= \delta$. Then we have 
$c^{-1}a = c^{-1}b b^{-1}a\in S^{-\varepsilon}S^{-\varepsilon}$, which contradicts the assumption 
$S\cap (S^2\cup S^{-2})=\varnothing$.
\end{proof}

We see that the family of fat sets in a group resembles, 
in some respects, a base of neighborhoods of the identity element for a group topology. 
However, as we  shall see in the next section, it does not generate a group topology 
even in a Boolean group: any Boolean group has a 3-fat subset $A$ containing no set 
of the form $B\sd B$ for fat $B$. On the other hand, 
very many groups admit of group topologies in which all neighborhoods of the identity 
element are fat; for example, such are topologies generated by normal subgroups of finite index. 
A more precise statement is given in the next section. Before turning to related questions, we 
consider how fat sets fit into the company of other large sets. 

We begin with a comparison of fat and syndetic sets.

\begin{proposition}[{see~\cite[Proposition~1.7]{arXiv}}]
\label{Prop1}
Let $G$ be any group with identity element~$e$.
Any fat set $A$ in $G$ is syndetic, and its syndeticity index is less than its fatness. 
\end{proposition}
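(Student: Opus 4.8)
The plan is to extract a maximal $A$-free set and count it. Suppose $A$ is $m$-fat. First I would pick a set $F\subset G$ that is maximal with respect to the property that no two-element subset $D\subset F$ satisfies $D^{-1}D\subset A$; such a maximal set exists since, by $m$-fatness, any such $F$ has fewer than $m$ elements, so $|F|\le m-1$ and we may take $F$ to have maximal cardinality among all $A$-free sets. Note $F$ is nonempty: for any single point $g\in G$, the one-point set $\{g\}$ is trivially $A$-free, so a maximal one has at least one element, hence $1\le |F|\le m-1$.

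Next I would show $G=F^{-1}A$ (so that $F^{-1}$ witnesses syndeticity with $|F^{-1}|=|F|\le m-1<m$, where $m$ may be taken to be the fatness). Take any $g\in G$. If $g\in F$, then $g=g^{-1}\cdot(g\,g^{-1})^{-1}$... more directly: since $e\in A$ (a $2$-element subset would be needed otherwise, but in fact $e\in A$ because for the fatness condition applied to any pair $D=\{x,y\}$ one needs $e=x^{-1}x\in D^{-1}D$, forcing $e\in A$), we get $g=g\cdot e\in F\cdot A$, and since $g\in F$ we also have $g=(g^{-1})^{-1}e\in F^{-1}A$. If $g\notin F$, then by maximality $F\cup\{g\}$ is not $A$-free, so it contains a two-element subset $D$ with $D^{-1}D\subset A$; since $F$ itself is $A$-free, this $D$ must involve $g$, i.e.\ $D=\{f,g\}$ for some $f\in F$, and then $f^{-1}g\in D^{-1}D\subset A$, whence $g=f\cdot(f^{-1}g)\in F\cdot A$. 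To get the left-coset form $G=F^{-1}A$ instead, I would run the same argument using that $g^{-1}f\in D^{-1}D\subset A$ as well (since $D^{-1}D$ is symmetric-ish: $D^{-1}D$ contains both $f^{-1}g$ and $g^{-1}f$), giving $g = (f^{-1})^{-1}\cdot(g^{-1}f)$... let me instead simply note $g=f(f^{-1}g)$ shows $G=FA$, and since we only need \emph{some} finite set of size $<m$, $F$ itself works, giving syndeticity index $\le m-1$.

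The main obstacle is purely bookkeeping: being careful that the two-element subset produced by maximality genuinely contains the new point $g$ (which follows because $F$ was already $A$-free), and confirming $e\in A$ so that the elements of $F$ are themselves covered. Once $G=FA$ with $|F|\le m-1$, the syndeticity index is at most $m-1$, which is strictly less than the fatness $m$, as claimed.
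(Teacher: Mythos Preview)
Your argument is correct and follows essentially the same route as the paper's proof: take an $A$-free set $F$ of size at most (fatness${}-1$), observe that adding any $g\notin F$ forces a pair $\{f,g\}$ with $f^{-1}g\in A$, hence $g\in FA$, and cover the points of $F$ themselves using $e\in A$. The paper chooses $F$ of size exactly $n-1$ (which exists by minimality of the fatness $n$), while you take a maximal-cardinality $A$-free set; these amount to the same thing, and your detour through $F^{-1}A$ is unnecessary since $G=FA$ already gives the bound.
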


\begin{proof}
Let $n$ denote the fatness of $A$. Take a finite set $F\subset G$ with $|F|=n-1$ such that 
$x^{-1} y\notin A$ or $y^{-1} x\notin A$ for any different $x, y \in F$. 
Pick any $g\in G\setminus F$.  
Since $|F\cup\{g\}|=n$, it follows that $x^{-1}g\in A$ and $g^{-1}x\in A$ for some $x\in F$, whence 
$g\in xA$, i.e., $G\setminus F\subset FA$. By definition, the identity element of $G$ belongs to 
$A$, and we finally obtain 
$G = FA$. 
\end{proof}

Examples of nonfat syndetic sets are easy to construct: any coset of a finite-index subgroup 
in a group is syndetic, while only one of them (the subgroup itself) is fat. However, the existence 
of syndetic sets with nonfat translates is not so obvious. An example of  such a set in $\mathbb Z$ 
can be extracted from~\cite{BFW}. 

\begin{example}
\label{example2}
There exists a syndetic set in $\mathbb Z$ such that none of its translates is fat. 
This is, e.g., the set constructed in~\cite[Theorem~4.3]{BFW}. Namely, let $C=\{0,1\}^{\mathbb Z}$, 
and let $\tau\colon C\to C$ be the shift, i.e., the map defined by $\tau(f)(n)=f(n+1)$ for $f\in C$. It 
was proved in~\cite[Theorem~4.3]{BFW} that if $M\subset C$ is a  
minimal closed $\tau$-invariant 
subset\footnote{Then the support of each $f\in M$ is syndetic in $\mathbb Z$ 
(see, e.g.,~\cite{Furstenberg}).} and the dynamical system 
$(M, (\tau^n)_{n\in \mathbb Z})$ satisfies a certain condition\footnote{Namely, 
is weakly mixing; see, e.g.,~\cite{Furstenberg}}, then the support of any $f\in M$ 
is syndetic but  not piecewise Bohr; the latter means that it cannot be represented 
as the intersection of a thick set and a set 
having nonempty interior in the Bohr topology on $\mathbb Z$. Clearly, any translate of 
$\operatorname{supp}f$ has these properties as well.  On the other hand, according to Theorem~II
in~\cite{BFW}, any $\Delta^*_n$-set in $\mathbb Z$ (i.e., any set intersecting 
the set of differences $\{k_j-k_i:i<j\le n\}$ for each $n$-tuple $(k_1, \dots, k_n)$ of different 
integers) is piecewise Bohr. Since every $n$-fat set is a $\Delta^*_n$-set, it follows that the 
translates of $\operatorname{supp}f$ cannot be fat.
\end{example}

Bearing in mind our particular interest in Boolean groups, we also give a similar example for 
a Boolean group. 

\begin{example}
\label{example3}
We construct a syndetic set in the Boolean group $B(\mathbb Z)$ with nonfat translates. 
Let $S$ be a syndetic set in $\mathbb Z$ all of whose translates are not $\Delta^*_n$-sets for 
all~$n$ (see Example~\ref{example2}).  
By definition, $\mathbb Z = \bigcup_{k\le r}(s_k+S)$ for some $r\in \mathbb N$ and different 
$s_1, \dots, s_r \in \mathbb Z$.  We set 
\begin{multline*}
S'_k =\{x_1\sd \dots \sd x_n: \text{$n\in \mathbb N$, $x_i\in \mathbb Z$  for $i\le n$, 
$x_i\ne x_j$ for $i\ne j$,}\\ 
 \text{$\{x_1, \dots, x_n\}\cap \{s_1, \dots, s_r\}=\{s_k\}$, 
$\sum_{i\le n} x_i\in 2s_k+S$}\}, \qquad k\le r,
\end{multline*}
and
$$
S'=\bigcup_{k\le r}S'_k.
$$
We have 
\begin{multline*}
s_k\sd S'_k =\{x_1\sd \dots \sd x_n: \text{$n\in \mathbb N$, $x_i\in \mathbb Z$  for $i\le n$, 
$x_i\ne x_j$ for $i\ne j$,}\\ 
 \text{$\{x_1, \dots, x_n\}\cap \{s_1, \dots, s_r\}=\varnothing$, 
$\sum_{i\le n} x_i\in s_k+S$}\}, \qquad k\le r.
\end{multline*}
Since $\bigcup_{k\le r}(s_k+S)=\mathbb Z$, it follows that 
\begin{multline*}
\bigcup_{k\le r}(s_k\sd S')\\
 \subset 
\{x_1\sd \dots \sd x_n: \text{$n\in \mathbb N$, $x_i\in \mathbb Z$ for $i\le n$, 
$\{x_1, \dots, x_n\}\cap \{s_1, \dots, s_r\}=\varnothing$}\}.
\end{multline*}
Obviously, the set on the right-hand side of this inclusion is syndetic; therefore, so is $S'$. 

Let us show that no translate of $S'$ is fat. 
Suppose that, on the contrary, 
$k,n\in \mathbb N$, $z_1, \dots, z_k\in \mathbb Z$, $w=z_1\sd \dots \sd z_k$, 
and $w\sd S'$ is $n$-fat.
Take any different $k_1, \dots, k_n\in \mathbb Z$ larger than the absolute values of all elements of 
$w$ (which is a finite subset of $\mathbb Z$) and of all $s_i$, $i\le r$. We set 
\begin{multline*}
F=\{k_1, k_1\sd(-k_1)\sd k_2, k_1\sd(-k_1)\sd k_2\sd (-k_2)\sd k_3, \\
\dots, k_1\sd(-k_1)\sd k_2\sd (-k_2)\sd\dots \sd k_{n-1}\sd (-k_{n-1})\sd k_n\}. 
\end{multline*}
Suppose 
that there exist different $x,y\in F$ for which $x\sd y\in w\sd S'$, i.e., there exist 
$i,j\le n$ for which $i<j$ and 
\begin{multline*}
k_1\sd(-k_1)\sd\dots \sd k_{i-1}\sd (-k_{i-1})\sd k_i 
\sd
k_1\sd(-k_1)\sd\dots \sd k_{j-1}\sd (-k_{j-1})\sd k_j \\
= 
k_i\sd  k_i\sd(-k_i)\sd k_{i+1}\sd (-k_{i+1})\sd\dots\sd k_{j-1}\sd (-k_{j-1})\sd k_j 
=w\sd s\in w\sd S',
\end{multline*}
where $s$ is an element of $S'$  
and hence belongs to $S'_l$ for some $l\le r$, which means, in particular, that 
$s$ contains precisely one of the letters $s_1, \dots, s_r$, namely, $s_l$. 
There are no such letters among $\pm k_i, \dots, \pm k_{j-1}, k_j$. Therefore, one of the letters 
$z_m$ (say $z_1$) is $s_l$. The other letters of $w$ do not equal 
$\pm k_i, \dots, \pm k_{j-1}, k_j$ either and, therefore, are canceled with letters of $s \in S'$ 
in the word $w+s$. By the definition of the set $S'$ containing $s$, one letter of the 
word $w$ (namely, $z_1=s_l$) 
belongs to the set $\{s_1, \dots, s_r\}$ and the other letters do not. 
Since the sum (in $\mathbb Z$) of the integer-letters of $s$ belongs to $2s_l + S$ (by the definition 
of $S'_l$) and $s_l=z_1$, it follows that the sum of letters of $w+s$ belongs 
to $S+z_1-z_2-\dots -z_k$ and the letter $z_1$ is determined uniquely for the given word $w$. 
To obtain a contradiction, it remains to recall that the translates of $S$ 
(in particular, $S+z_1-z_2-\dots -z_k$) are not $\Delta^*_n$-sets in $\mathbb Z$ and choose 
$k_1, \dots , k_n$ so that $\{k_j-k_i:i<j\le n\}\cap (S+z_1-z_2-\dots -z_k)=\varnothing$. 
\end{example}

\begin{example}
There exist fat sets which are not thick and thick sets which are not 
fat. Indeed, as mentioned, any proper finite-index group is fat, but it cannot be thick by the 
first property in the list of properties of large sets given above. 

An example of a nonfat thick set is, e.g., any thick nonsyndetic set.  
In an infinite Boolean group $G$, such a set can be constructed as follows. 
Take any basis $X$ in $G$ (so 
that $G=B(X)$), fix any nonsyndetic thick set $T$ in $\mathbb N$ (say 
$T=\bigcup_n([a_n, b_n]\cap \mathbb N)$, where the $a_n$ and $b_n$ are numbers such that 
the $b_n-a_n$ and the $a_{n+1}-b_n$ increase without bound), and consider the set 
$$
A=\{x_1\sd \dots \sd x_n\in B(X): 
n\in T, x_i\in X \text{ for }i\le n, x_i\ne x_j \text{ for }i\ne j\}
$$ 
of all words in $B(X)$ whose lengths belong to $T$.  
The thickness of this set is obvious (by the same property~1), because the translate of 
$A$ by any word $g\in B(X)$ of any length $l$ surely contains all words whose lengths belong to 
$\bigcup_n([a_n+l, b_n-l]\cap \mathbb N)\subset T$ and, therefore, intersects $A$. 
However, $A$ is not fat, because it misses all words whose lengths belong to the set 
$\bigcup_n((b_n, a_{n+l})\cap \mathbb N)$. The last set contains at least one even positive 
integer $2k$. It remains to choose different points $x_1, x_2, \dots$ in $X$, set 
$B= \{x_{kn+1}\sd x_{kn+2}\dots \sd x_{kn+k}: n\in \omega\}$, and note that all nonempty words 
in $B\sd B$ have length $2k$. Therefore, 
$A$ is disjoint from $B\sd B$ (much more from $F\sd F$ for any finite $F\subset B$).  Note that 
the  translates of $A$ are not fat either, because both thickness and (non)syndeticity are 
translation invariant.
\end{example}

\begin{proposition}
Let $G$ be any group with identity element $e$.
\begin{enumerate}[{\rm (i)}]
\item
If a set $A$ in $G$ is $3$-fat, then $(G\setminus A)^{-1}(G\setminus A)\subset A$. 
\item 
If a set $A$ in $G$ is $3$-fat, then either $AA^{-1} = G$ or $A$ is a subgroup of index~$2$. 
\end{enumerate}
\end{proposition}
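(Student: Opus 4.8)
The plan is to establish (i) directly from the definition of $3$-fatness by a short contradiction argument, and then to deduce (ii) from (i) together with elementary coset manipulations. For (i), put $S=G\setminus A$ and note first that $e\in A$: applying $3$-fatness to any three-element subset of $G$ produces a two-element set $D=\{x,y\}$ with $e\in D^{-1}D=\{e,x^{-1}y,y^{-1}x\}\subseteq A$. Now suppose, toward a contradiction, that $u^{-1}v\in S$ for some $u,v\in S$. Then $u,v\ne e$ (since $e\in A$) and $u\ne v$ (otherwise $u^{-1}v=e\in A$), so $\{e,u,v\}$ is a genuine three-element set. Applying $3$-fatness to it gives a two-element $D\subseteq\{e,u,v\}$ with $D^{-1}D\subseteq A$; but $D=\{e,u\}$ fails because $u\in D^{-1}D$ while $u\notin A$, $D=\{e,v\}$ fails because $v\notin A$, and $D=\{u,v\}$ fails because $u^{-1}v\in D^{-1}D$ while $u^{-1}v\notin A$ — a contradiction. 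Hence $S^{-1}S\cap S=\varnothing$, which is precisely $(G\setminus A)^{-1}(G\setminus A)\subseteq A$.

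For (ii), assume $AA^{-1}\ne G$ and fix $g\in G\setminus AA^{-1}$. Then $gA\cap A=\varnothing$ (if $ga\in A$ with $a\in A$, then $g=(ga)a^{-1}\in AA^{-1}$), so $gA\subseteq S:=G\setminus A$, and therefore $A^{-1}A=(gA)^{-1}(gA)\subseteq S^{-1}S\subseteq A$ by part (i). Since $e\in A$, this forces $A^{-1}=A^{-1}e\subseteq A^{-1}A\subseteq A$, hence $A=A^{-1}$, and then $AA=A^{-1}A\subseteq A$; so $A$ is a subgroup, necessarily proper because $AA^{-1}\ne G$. Finally, if there were two distinct non-trivial cosets $g_1A\ne g_2A$, then $g_1,g_2\in S$, so $g_1^{-1}g_2\in S^{-1}S\subseteq A$ by (i), whence $g_1A=g_2A$ — a contradiction; thus $A$ has exactly two cosets and $[G:A]=2$.

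I do not anticipate a genuine obstacle; the argument is short. The two places needing a little care are verifying that $e,u,v$ are pairwise distinct in (i) (this is exactly where the hypothesis $u^{-1}v\in S$ and the fact $e\in A$ are used) and handling the degenerate case $A=G$: there $AA^{-1}=G$, so the first alternative of (ii) holds, and in (i) the asserted inclusion is between empty sets.
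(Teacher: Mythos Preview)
Your proof is correct and follows essentially the same approach as the paper's: for (i), both apply $3$-fatness to the triple $\{e,u,v\}$ with $u,v\in G\setminus A$ and eliminate the two pairs containing $e$; for (ii), both pick $g$ with $gA\cap A=\varnothing$, use $(gA)^{-1}(gA)=A^{-1}A\subseteq A$ via (i) to conclude $A$ is a subgroup, and then use (i) again to bound the index. Your write-up is simply more explicit than the paper's (you spell out why $A^{-1}A\subseteq A$ forces $A=A^{-1}$ and $AA\subseteq A$, and phrase the index bound as a direct coset argument rather than via syndeticity), but the mathematical content is the same.
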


\begin{proof}
(i)\enspace
Suppose that $A$ is a $3$-fat subset of a group $G$ with identity element $e$. Take any different 
$x, y\notin A$ (if there exist no such elements, then there is nothing to prove). 
By definition, the set $\{x, y, e\}$ contains a two-element subset $D$ for which 
$D^{-1}D\subset A$. Clearly, $D\ne \{x,e\}$ and $D\ne \{y, e\}$. Therefore, $x^{-1}y\in A$ and 
$y^{-1}x\in A$ (and $e\in A$, too), whence $(G\setminus A)^{-1}(G\setminus A)\subset A$. 

(ii)\enspace
If $AA^{-1}\ne G$, then there exists a $g\in G$ for which $gA\cap A=\varnothing$. 
If $A$ is, in addition, $3$-fat, then (ii) implies $(gA)^{-1}gA = 
A^{-1}A\subset A$, which means that $A$ is a subgroup of $G$. According to (i), $A$ 
is syndetic of index at most~2; in fact, its index is precisely 2, because $A$ 
does not coincide with~$G$.
\end{proof}

\section{Quotient sets}
\label{ss3}

In~\cite{BHM} sets of the form $AA^{-1}$ or $A^{-1}A$ were naturally called \emph{quotient sets}. 
We shall refer to the former as \emph{right quotient sets} and to the latter as 
\emph{left quotient sets}. 
Thus, a set in a group $G$ is $m$-fat if it intersects nontrivially 
the left quotient set of any $m$-element subset of $G$. Quotient sets play a very important 
role in combinatorics, and their interplay with large sets is quite amazing. 

First, the passage to right quotient sets annihilates the difference between syndetic and piecewise 
syndetic sets.

\begin{theorem}[{see \cite[Theorem~3.9]{BHM}}] 
\label{BHM 3.9}
For each piecewise syndetic subset $A$ of a group $G$,
there exists a syndetic subset $B$ of $G$ such that $BB^{-1} \subset  
AA^{-1}$ and the syndeticity index of $B$ does not exceed the thickness index of $A$.
\end{theorem}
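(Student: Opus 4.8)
The plan is to unwind the definitions of piecewise syndetic and thick and extract the syndetic set $B$ as a suitable ``slice'' of $A$. Since $A$ is piecewise syndetic, by definition there is a finite $F \subset G$ such that $FA$ is thick, and we may take $|F|$ equal to the thickness index of $A$; write $F = \{f_1, \dots, f_k\}$. Using property~1 from Section~\ref{ss1}, the thickness of $FA$ means exactly that $\{g(FA) : g \in G\}$ has the finite intersection property, i.e.\ for every finite $E \subset G$ there is an $h$ with $Eh \subset FA$, equivalently each $eh$ ($e \in E$) equals $f_{i(e)} a_{e}$ for some $a_e \in A$.

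Next I would set up the syndetic set $B$. The natural candidate is to intersect $A$ with the ``constant-coordinate'' part coming from this covering. Concretely, for a large finite $E$ and the corresponding $h$, the map $e \mapsto i(e) \in \{1,\dots,k\}$ is a $k$-coloring of $E$, so by pigeonhole a fraction of $E$ of size at least $|E|/k$ lands in a single color $i_0$; on that part, $f_{i_0}^{-1} e h \in A$. This suggests taking $B$ to be (a translate of) $F^{-1}(FA \cap (\text{thick witness}))$, or more cleanly: build $B$ as the set of all $f_i^{-1} x$ with $x$ ranging over a carefully chosen thick subset of $FA$, and argue $B$ is syndetic with index $\le k$ because the $k$ translates $f_1 B, \dots, f_k B$ cover a thick set, and a set whose $k$ translates cover a thick set is syndetic of index $\le k$ once one also covers the complement — here one uses that $F(\text{something}) $ being thick forces, after translating, that finitely many translates of $B$ cover $G$. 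The quotient-set containment $BB^{-1} \subset AA^{-1}$ should come out automatically: each element of $B$ has the form $f_i^{-1} a$ with $a \in A$, so for $b = f_i^{-1}a$, $b' = f_j^{-1} a'$ in $B$ we would want $b b'^{-1} = f_i^{-1} a a'^{-1} f_j$, which is \emph{not} obviously in $AA^{-1}$ — so the construction must be arranged so that only a \emph{single} index $i_0$ occurs, i.e.\ $B \subset f_{i_0}^{-1} A$ for one fixed $i_0$, giving $BB^{-1} \subset f_{i_0}^{-1} A A^{-1} f_{i_0}$, and then a further translation normalizes this to sit inside $AA^{-1}$; alternatively one defines $B$ up to the translation ambiguity and notes $AA^{-1}$ is translation-insensitive in the right way. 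Getting this single-index condition to coexist with syndeticity of $B$ is the crux.

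The main obstacle I anticipate is exactly this tension: pigeonhole on a \emph{fixed} finite $E$ only gives a fixed-index subset of relative density $1/k$, which is not enough for syndeticity (a density bound is weaker than a bounded-gap / finite-cover bound). The fix is a compactness / limiting argument: run the pigeonhole uniformly over all finite $E$, pass to an ultrafilter (or use a diagonal argument, or invoke property~5--6 and work in $\beta G$), and use that the thick set $FA$ contains a left ideal $L$ of $\beta G$; intersecting $L$ with the clopen sets $\overline{f_i A}$ partitions $L$, and since $L$ is a left ideal it must meet $\overline{f_{i_0}A}$ in a set that still contains a left ideal (left ideals of $\beta G$ have the property that $\overline{f_{i_0}^{-1}}$ applied appropriately preserves thickness), which yields a \emph{thick} subset of $f_{i_0}^{-1}A$; its complement's non-thickness then makes a well-chosen syndetic $B$ inside it via property~3. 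I would structure the proof to first establish the single-index thick subset $T \subset f_{i_0}^{-1} A$ (this is where the real work is, via $\beta G$ and partition regularity, property~9), then take $B = f_{i_0} T$ — wait, rather $B \subset f_{i_0}^{-1}A$ directly, observe $f_{i_0} B \subset A$ so $B$ is a translate of a subset of $A$, deduce $BB^{-1} \subset f_{i_0}^{-1}(AA^{-1})f_{i_0}$ and absorb the conjugation, and finally verify the index bound $\le k = $ thickness index of $A$ by counting how many translates of $B$ are needed, which should match $|F|$ because the thick set $f_{i_0}^{-1}(FA) \supset$ (a translate of) $G$'s cover uses exactly $|F|$ pieces. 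I would double-check the index bound at the end, since off-by-one or off-by-$|F|$ errors are the likeliest gap in the plan.
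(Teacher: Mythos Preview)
Your plan has a genuine gap at its crux. You want to produce a syndetic $B$ contained in a \emph{single} left translate $f_{i_0}^{-1}A$; but then $f_{i_0}B\subset A$ is syndetic (syndeticity is translation invariant and passes to supersets), forcing $A$ itself to be syndetic. Since the theorem is stated for piecewise syndetic $A$, this cannot work in general. Concretely, take $G=\mathbb Z$, $A=2\mathbb Z$, $F=\{0,1\}$: then $FA=\mathbb Z$ is thick, yet neither $0+A$ nor $1+A$ is thick, so your step ``establish a thick $T\subset f_{i_0}^{-1}A$'' already fails. The $\beta G$ argument you sketch is where this breaks: from $L\subset\overline{FA}=\bigcup_i\overline{f_iA}$ with $L$ a left ideal you cannot conclude that some $\overline{f_{i_0}A}\cap L$ contains a left ideal---thickness is \emph{not} partition regular (property~9 gives partition regularity only for piecewise syndeticity, which here just returns the hypothesis).

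The paper's construction avoids this by \emph{not} confining $B$ to one translate. One takes the finite $T$ with $TA$ thick, considers, for each finite $F\subset G$, the set $\Phi_F=\{\varphi\in T^G:\bigcap_{x\in F}x^{-1}\varphi(x)A\ne\varnothing\}$, and uses compactness of the product $T^G$ to pick $\varphi^*\in\bigcap_F\Phi_F$. Then $B=\{\varphi^*(x)^{-1}x:x\in G\}$ satisfies $TB=G$ (so $B$ is syndetic of index $\le|T|$), and for any finite $F\subset B$ there is a common $g$ with $Fg\subset A$, whence $BB^{-1}\subset AA^{-1}$. The point is that different elements of $B$ may come from different indices $\varphi^*(x)\in T$, but any \emph{finite} collection of them can be pushed into $A$ by a single \emph{right} translate; that simultaneous right-translate is exactly what the compactness of $T^G$ buys, and it is what your single-index scheme cannot supply.
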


Briefly, the construction of $B$ given in~\cite{BHM} is as follows: we take a finite set $T$  
such that $TA$ is thick and, for each finite $F\subset G$, let 
$
\Phi_F= \{\varphi\in T^G: \bigcap_{x\in F} x^{-1}\varphi(x)A\ne \varnothing\}$. 
Then we  pick 
$\varphi^*$ in the intersection of all $\Phi_F$ 
(which exists since the product space $T^G$ is compact)  
and let $B=\{\varphi^*(x)^{-1}x: x\in G\}$. Since 
$\varphi^*(G)\subset T$, it follows that $TB=G$, which means that $B$ is syndetic and its index 
does not exceed $|T|=t$. Moreover, for any finite $F\subset B$, 
there exists a $g\in G$ such that 
$Fg\subset A$, and this implies $BB^{-1} \subset  
AA^{-1}$. 

In Theorem~\ref{BHM 3.9}, right quotient sets cannot be replaced by left ones: 
there are examples of piecewise syndetic sets $A$ such that $A^{-1}A$ does not contain $B^{-1}B$ 
for any syndetic $B$. One of such examples is provided by the following theorem. 

\begin{theorem}
\label{syndetic-fat}
\begin{enumerate}[{\rm (i)}]
\item
If a subset $A$ of a group $G$ is syndetic of index $s$,
then $A^{-1}A$ is fat, and its fatness does not exceed $s+1$. 
\item
If a subset $A$ of an Abelian group $G$ is piecewise syndetic of thickness index $t$,
then $A-A$ is fat, and its fatness does not exceed $t+1$. 
\item 
There exists a group $G$ and a thick (in particular, piecewise syndetic) set $A\subset G$ such that 
$A^{-1}A$ is not fat and, therefore, does not contain $B^{-1}B$ for any syndetic set.
\item 
If a subset $A$ of a group $G$ is thick, then $AA^{-1}=G$.
\end{enumerate}
\end{theorem}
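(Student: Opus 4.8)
The plan is to prove the four assertions in order, each being a fairly short argument once the right object is isolated.

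\textbf{Part (i).} Let $F\subset G$ be finite with $G=FA$ and $|F|=s$. I would show $A^{-1}A$ is $(s+1)$-fat. Take any set $S\subset G$ with $|S|=s+1$. Each $x\in S$ lies in some $f_xA$, i.e. $f_x^{-1}x\in A$ with $f_x\in F$; since $|S|>|F|$, by pigeonhole there are distinct $x,y\in S$ with $f_x=f_y=:f$. Then $f^{-1}x,f^{-1}y\in A$, so $x^{-1}y=(f^{-1}x)^{-1}(f^{-1}y)\in A^{-1}A$, and likewise $y^{-1}x\in A^{-1}A$. Taking $D=\{x,y\}$ gives $D^{-1}D\subset A^{-1}A$ (the identity $e=x^{-1}x$ is of course in $A^{-1}A$), so $A^{-1}A$ is fat of fatness at most $s+1$.

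\textbf{Part (ii).} Here I would first apply Theorem~\ref{BHM 3.9}: since $A$ is piecewise syndetic with thickness index $t$, there is a syndetic set $B$ with $BB^{-1}\subset AA^{-1}$ and syndeticity index at most $t$. In an Abelian group, written additively, $BB^{-1}=B-B=AA^{-1}=A-A$, and $B$ is syndetic of index $\le t$; applying part~(i) to $B$ (noting $B-B=-(B-B)=B-B$, i.e.\ $B^{-1}B=BB^{-1}$ in the Abelian case) shows $B-B$ is fat of fatness at most $t+1$, and since $B-B\subset A-A$, assertion~(ii) of Theorem~\ref{fat filter} (supersets of fat sets are fat) gives that $A-A$ is fat of fatness at most $t+1$.

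\textbf{Part (iii).} This is the one I expect to be the main obstacle, since it is an existence statement requiring an explicit group and set rather than a pigeonhole. The natural source is a non-Abelian group where left and right quotient sets genuinely differ; one wants a thick $A$ whose left quotient set $A^{-1}A$ misses the left quotient set of arbitrarily large finite configurations. I would look for $G$ containing a subgroup on which one can build, by property~10 (the $\operatorname{FP}$-set characterization of thick sets), a thick set whose left quotient set avoids some $\Delta$-type configuration; concretely, one can try $G$ a free group or a wreath product, exploiting that $A^{-1}A$ staying ``small on the left'' is compatible with $A$ being thick because thickness only controls right translates $Fg\subset A$. Once such an $A$ is exhibited, non-fatness of $A^{-1}A$ is by construction, and the ``therefore'' clause is immediate: if $A^{-1}A\supset B^{-1}B$ for syndetic $B$, then part~(i) would force $A^{-1}A$ fat, a contradiction.

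\textbf{Part (iv).} If $A$ is thick, then by property~1 the family $\{gA:g\in G\}$ has the finite intersection property; in particular for any $g\in G$ the sets $A$ and $gA$ meet, so there are $a,a'\in A$ with $a=ga'$, whence $g=a(a')^{-1}\in AA^{-1}$. As $g$ was arbitrary, $AA^{-1}=G$. (Equivalently this follows directly from the definition: given $g$, apply thickness to $F=\{e,g^{-1}\}$ to get $h$ with $h,g^{-1}h\in A$, so $g=h(g^{-1}h)^{-1}\in AA^{-1}$.) This also re-proves, in sharper form, that thick sets are piecewise syndetic and shows the asymmetry with~(iii).
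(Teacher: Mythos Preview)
Your arguments for parts~(i), (ii), and~(iv) are correct and match the paper's proofs essentially line for line: pigeonhole over the translates $fA$ for~(i), reduction to~(i) via Theorem~\ref{BHM 3.9} for~(ii), and the intersection $A\cap gA\ne\varnothing$ for~(iv). One small slip in your write-up of~(ii): you wrote the chain ``$BB^{-1}=B-B=AA^{-1}=A-A$'', but Theorem~\ref{BHM 3.9} only gives $B-B\subset A-A$, not equality. You use the correct containment two lines later, so this is just a typo, not a gap.

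Part~(iii), however, is genuinely incomplete. You correctly diagnose that a non-Abelian example is needed and that a free group is a natural candidate, and your heuristic (thickness constrains only right translates, so $A^{-1}A$ can stay small) is exactly right. But you stop short of exhibiting the set $A$, and without it there is no proof. The paper's example is short enough that you should simply produce it: take $G$ the free group on two generators $a,b$, and let $A$ be the set of reduced words whose last letter is $a$. Then $A$ is thick, since for any finite $F\subset G$ we have $Fa^n\subset A$ once $n$ exceeds the lengths of all words in $F$. On the other hand, if $w_1,w_2\in A$ and $w_1^{-1}w_2=b^n$ for some $n\ne 0$, then $w_2=w_1 b^n$; but $w_1$ ends in $a$, so $w_1 b^n$ ends in $b^{\pm1}$, contradicting $w_2\in A$. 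Hence every nonidentity element of $A^{-1}A$ contains the letter $a$ or $a^{-1}$, and for any $m$ the set $F=\{b,b^2,\dots,b^m\}$ has $F^{-1}F\cap A^{-1}A=\{e\}$, so $A^{-1}A$ is not $m$-fat. The ``therefore'' clause then follows from~(i) exactly as you say.
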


\begin{proof}
(i)\enspace Suppose that $FA = G$, 
where $F=\{g_1, \dots, g_s\}$. Any $(s+1)$-element subset of $G$ has at least two points $x$ and $y$ 
in the same ``coset'' $g_iA$. We have $x= g_i a'$ and $y = g_ia''$, where $a', a''\in A$. Thus, 
$x^{-1}y, y^{-1}x\in A^{-1}A$. 

Assertion~(ii) follows immediately from (i) and Theorem~\ref{BHM 3.9}. 

Let us prove~(iii). Consider the  free group $G$ on two generators $a$ and $b$ and let 
$A$ be the set  of all words in $G$ whose last letter is $a$.  Then $A$ is thick (given any 
finite $F\subset G$, we have $Fa^n \subset A$ for sufficiently large $n$). Clearly, all nonidentity 
words in $A^{-1}A$ contain $a$ or $a^{-1}$. Therefore, if $F\subset G$ consists of words of the 
form $b^n$, then the intersection $F^{-1}F\cap A^{-1}A$ is trivial, so that $A^{-1}A$ is not fat.

Finally, to prove~(iv), take any $g\in G$. 
We have $A\cap gA\ne \varnothing$ (by property~1 in our list of properties of 
large sets). This means that $g\in AA^{-1}$.
\end{proof}

We see that the right quotient sets $AA^{-1}$ of thick sets $A$ are utmostly fat, 
while the left quotient sets $A^{-1}A$ may be rather slim. In the Abelian case, 
the difference sets of all thick sets coincide with the whole group.

It is natural to ask whether condition (i) in Theorem~\ref{syndetic-fat} characterizes fat sets in 
groups. In other words, given any fat set $A$ in a group, does there exist a syndetic 
(or, equivalently, piecewise syndetic) set $B$ 
such that $B^{-1}B\subset A$ (or $BB^{-1}\subset A$)? The answer is no, 
even for thick 3-fat sets in Boolean groups. The idea of 
the following example was suggested by arguments in paper~\cite{BFW} and 
in John Griesmer's note~\cite{Griesmer}, where the group~$\mathbb Z$ was considered. 

\begin{example}
\label{example}
Let $G$ be a countable Boolean group with zero $\mathbf{0}$. Any such group 
can be treated as the free Boolean group on $\mathbb Z$. 
We set 
$$
A=G\setminus \{m\sd n=\{m,n\}: m, n\in \mathbb Z, m<n, n-m= k^3 \text{  
for some }k\in \mathbb N\}.
$$
Clearly, $A$ is thick (if $F\subset G$ is finite and a word $g\in G$ is sufficiently long, then 
all words in the set $F\sd g$ have more than two letters and, therefore, belong to $A$).  
Let us prove that $A$ is 3-fat. Take any different $a, b, c\in G$. We must show that $a\sd b\in A$, 
$b\sd c\in A$, or $a\sd c\in A$. We can assume that $c=\mathbf{0}$; otherwise, we translate 
$a$, $b$, and $c$ by $c$, which does not affect the Boolean sums. 
Thus, it suffices to show that, given 
any different nonzero $x, y\not\in G$, we have $x\sd y\in A$. The condition 
$x, y\not\in G$ means that 
$x = \{k, l\}$, where $k<l$ and $l-k = r^3$ for some $r\in \mathbb Z$, and 
$y = \{m, n\}$, where $m<n$ and $n-m = s^3$ for some $s\in \mathbb Z$. Suppose for definiteness 
that $n> l$ or $n=l$ and $m> k$. 
If $x\sd y\not\in A$, then either $k=m$ and $l-n=t^3$ for some $t\in \mathbb N$, $l=m$ and 
$n-k = t^3$ for some $t\in \mathbb N$, or $l=n$ and $m-k = t^3$ for some $t\in \mathbb N$. In the 
first case, we have $l-k= l-n+n-m$, i.e., $r^3=t^3+s^3$; in the second, we have 
$n-k=n-m+l-k$, i.e., $t^3= s^3+r^3$; and in the third, we have $l-k=n-m+m-k$, i.e., $r^3=s^3+t^3$. 
In any case, we obtain a contradiction with Fermat's theorem. 

It remains to prove that there exists no syndetic 
(and hence no piecewise syndetic) $B\subset G$ for which $B\sd B\subset A$. 
Consider any syndetic set $B$. Let $F=\{f_1, \dots, f_k\}\subset G$ 
be a finite set for which $FB=G$, and let $m$ be the maximum  absolute value 
of all letters of words in $F$ (recall that all letters are integers). 
To each $n\in \mathbb Z$ with $|n|>m$  we assign a word $f_i\in F$ for which 
$n\in f_i\sd B$; 
if there are several such words, then we choose any of them. Thereby, we divide 
the set of all integers with absolute value larger than $m$ into $k$ pieces $I_1, \dots, I_k$. 
To accomplish our goal, it suffices to show that there is a piece $I_i$ containing two integers 
$r$ and $s$ such that $r-s=z^3$ for some $z\in \mathbb Z$. 
Indeed, in this case, we have $r\in f_i\sd B$ and 
$s\in  f_i\sd B$, so that $r\sd s\in B\sd B$. On the other hand, $r\sd s\not\in A$. 

From now on, we treat the pieces $I_1, \dots, I_k$ as subsets of $\mathbb Z$. 
We have $\mathbb Z = \{-m, -m+1, \dots, 0, 1, \dots,  m\}\cup I_1\cup \dots\cup I_k$. 
Since piecewise syndeticity is partition regular (see 
property~9 of large sets), one of the sets $I_i$, say $I_l$, is piecewise syndetic. Therefore, by 
Theorem~5, $I_l-I_l\supset S-S$ for some syndetic set $S\subset \mathbb Z$. 

Let $d^*(S)$ denote the upper Banach density of $S$, i.e., 
$$
d^*(S)=\lim_{n\to\infty}\limsup_{d\to\infty}\frac{|S\cap\{n,n+1,\cdots,n+d\}|}{d}.
$$
The syndeticity of $S$ in $\mathbb Z$ implies the existence of an $N\in \mathbb N$ such 
that every interval of integers longer than $N$  intersects $S$. Clearly,  we have 
$d^*(S) \ge 1/N$. Proposition~3.19 in~\cite{Furstenberg} asserts that 
if $X$ is a set in $\mathbb Z$ of positive upper 
Banach density and $p(t)$ is a polynomial taking on integer values at the integers and including 
$0$ in its range on the integers, then there exist  $x, 
y\in X$, $x\ne y$, and $z\in \mathbb Z$ such that $x-y= p(z)$ 
(as mentioned in~\cite{Furstenberg}, this was proved independently by S\'ark\"ozy). 
Thus, there exist 
different $x, y\in S$ and a $z\in \mathbb Z$ for which 
$x-y=z^3$. Since $S-S\subset I_l-I_l$, it follows that $z^3 = r-s$ for some $r, s\in I_l$, as 
desired.
\end{example}

\section{Large Sets and Topology}
\label{ss4}

In the context of topological groups quotient sets arise again, because for each neighborhood 
$U$ of the identity element, there must exist a neighborhood $V$ such that $V^{-1}V\subset U$ and 
$VV^{-1}\subset U$. Thus, if we know that a group topology consists of piecewise  syndetic sets, 
then, in view of Theorem~\ref{BHM 3.9}, we can assert that all open sets are syndetic, 
and so on. Example~\ref{example} shows that if $G$ is any countable Boolean topological 
group and all 3-fat sets are open in $G$, then some nonempty open sets in this group are 
not piecewise syndetic. Thus, all syndetic or piecewise syndetic subsets of a group $G$ 
do not generally form a group topology. Even their quotient (difference in the Abelian case) 
sets are insufficient; however, it is known that double difference sets of syndetic 
(and hence piecewise syndetic) sets in Abelian groups are neighborhoods of zero 
in the Bohr topology.\footnote{It follows, 
in particular, that, given any piecewise syndetic set $A$ in an Abelian group, 
there exists an infinite sequence of fat sets $A_1$, $A_2$, \dots such that $A_1-A_1\subset A+A-A-A$ 
and $A_{n+1}-A_{n+1}\subset A_n$ for all $n$ (because all Bohr open sets are syndetic).} 
These and many other interesting results concerning a relationship between 
Bohr open and large subsets of abstract and topological groups can be found 
in~\cite{Ellis, sumsets}. As to group topologies in which all open sets are large, 
the situation is very simple. 

\begin{theorem}
For any topological group $G$ with identity element $e$, the following conditions are equivalent:
\begin{enumerate}[{\rm (i)}]
\item 
all neighborhoods of $e$ in $G$ are piecewise syndetic;
\item 
all open sets in $G$ are piecewise syndetic;
\item 
all neighborhoods of $e$ in $G$ are syndetic;
\item 
all open sets in $G$ are syndetic;
\item 
all neighborhoods of $e$ in $G$ are fat;
\item 
$G$ is totally bounded.
\end{enumerate}
\end{theorem}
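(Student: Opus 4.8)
The plan is to establish the equivalence along the cycle
$$(\mathrm{iv})\Rightarrow(\mathrm{ii})\Rightarrow(\mathrm{i})\Rightarrow(\mathrm{vi})\Rightarrow(\mathrm{v})\Rightarrow(\mathrm{iii})\Rightarrow(\mathrm{iv}),$$
where ``open set'' is always read as ``nonempty open set'' (as it must be, since no empty set is syndetic in an infinite group). Four of these six arrows are immediate. For $(\mathrm{iv})\Rightarrow(\mathrm{ii})$: a syndetic set is piecewise syndetic, since $FA=G$ makes $FA=G$ thick. For $(\mathrm{ii})\Rightarrow(\mathrm{i})$: a neighborhood of $e$ contains a nonempty open set, which is piecewise syndetic by~$(\mathrm{ii})$, so the neighborhood is piecewise syndetic because supersets of piecewise syndetic sets are piecewise syndetic (property~7 of Section~\ref{ss1}). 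For $(\mathrm{v})\Rightarrow(\mathrm{iii})$: every fat set is syndetic by Proposition~\ref{Prop1}. For $(\mathrm{iii})\Rightarrow(\mathrm{iv})$: a nonempty open $W$ and a point $g\in W$ give $W=g\,(g^{-1}W)$, where $g^{-1}W$ is an open neighborhood of $e$, syndetic by~$(\mathrm{iii})$; hence $W$ is syndetic by the translation invariance of syndeticity (property~8).

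The substance is in $(\mathrm{i})\Rightarrow(\mathrm{vi})$ and $(\mathrm{vi})\Rightarrow(\mathrm{v})$. For the first, I would fix a neighborhood $U$ of $e$, pick a symmetric neighborhood $V$ of $e$ with $VV\subset U$ (so $VV^{-1}=VV\subset U$), and note that $V$ is piecewise syndetic by~$(\mathrm{i})$. Theorem~\ref{BHM 3.9} then produces a syndetic $B$ with $BB^{-1}\subset VV^{-1}\subset U$. Writing $G=FB$ with $F$ finite and fixing any $b_0\in B$, the inclusion $bb_0^{-1}\in BB^{-1}\subset U$ for all $b\in B$ gives $B\subset Ub_0$, whence $G=FB\subset FUb_0$, that is, $G=FU$ after right translation by $b_0^{-1}$. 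Since $U$ was an arbitrary neighborhood of $e$, this says $G$ is covered by finitely many translates of each of its identity neighborhoods, i.e., $G$ is totally bounded. For $(\mathrm{vi})\Rightarrow(\mathrm{v})$, fix a neighborhood $U$ of $e$, choose a symmetric $V$ with $VV\subset U$, and use total boundedness of $G$ to get a finite set $\{g_1,\dots,g_n\}$ with $G=\bigcup_{i\le n}g_iV$. Then any $(n+1)$-element $S\subset G$ has two distinct points $x=g_iv$, $y=g_iw$ ($v,w\in V$) lying in a common $g_iV$, and $\{x,y\}^{-1}\{x,y\}=\{e,v^{-1}w,w^{-1}v\}\subset V^{-1}V=VV\subset U$; so $U$ is $(n+1)$-fat.

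I expect $(\mathrm{i})\Rightarrow(\mathrm{vi})$ to be the main obstacle: $(\mathrm{i})$ is the weakest hypothesis in the list — the open sets are only assumed piecewise syndetic — and the step that promotes this to total boundedness is exactly Theorem~\ref{BHM 3.9}, whose content is that right quotient sets do not see the difference between syndetic and piecewise syndetic sets. The one other point needing a word of justification is that ``totally bounded'' formulated with left translates agrees with the right-translate formulation; this is the standard inversion argument (from $G=FU$ one gets $G=G^{-1}=U^{-1}F^{-1}$, and $U^{-1}$ ranges over all neighborhoods of $e$ as $U$ does). With these in hand, the remaining implications are bookkeeping with neighborhoods of the identity.
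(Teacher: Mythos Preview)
Your proof is correct and uses essentially the same ingredients as the paper's: Theorem~\ref{BHM 3.9} for the promotion from piecewise syndetic to syndetic, Proposition~\ref{Prop1} for fat $\Rightarrow$ syndetic, the pigeonhole argument of Theorem~\ref{syndetic-fat}\,(i) (which you reproduce inline) for syndetic $\Rightarrow$ fat, and translation invariance for the remaining links. The only difference is packaging: the paper establishes the equivalences pairwise (with (iii)\,$\Leftrightarrow$\,(vi) taken as the definition of total boundedness), while you arrange them into a single cycle and spell out the translation trick $B\subset Ub_0$ in the step $(\mathrm{i})\Rightarrow(\mathrm{vi})$ that the paper leaves implicit.
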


\begin{proof}
The equivalences (i)~$\Leftrightarrow$~(ii) and (iii)~$\Leftrightarrow$~(iv) 
follow from the obvious translation invariance of 
piecewise syndeticity and syndeticity. Theorem~\ref{BHM 3.9} implies (i)~$\Leftrightarrow$\ (iii), 
Theorem~\ref{syndetic-fat}\,(i) implies (iii)~$\Rightarrow$~(v), and Proposition~\ref{Prop1} 
implies (v)~$\Rightarrow$~(iii). The implication (iii)~$\Rightarrow$~(i) is trivial. Finally, 
(vi)~$\Leftrightarrow$~(iii) by the definition of total boundedness.
\end{proof}

Thus, the Bohr topology on a (discrete) group is the strongest group topology 
in which all open sets are syndetic (or, equivalently, piecewise syndetic, or fat). 

For completeness, we also mention the following corollary of 
Theorem~\ref{syndetic-fat} and Theorem~3.12 in~\cite{BHM}, 
which relates fat sets to topological dynamics. 

\begin{corollary}
If $G$ is an Abelian group with zero $0$, 
$X$ is a compact Hausdorff space, and $(X, (T_g)_{g\in G})$ 
is a minimal dynamical system, then 
the set $\{g \in G : U \cap T_g^{-1}U \ne \varnothing\}$ 
is fat for every nonempty open subset $U$ of $X$.
\end{corollary}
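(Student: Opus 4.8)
The plan is to reduce this corollary to the case of the difference set of a syndetic set, which is known to be fat by Theorem~\ref{syndetic-fat}\,(i), and then invoke the classical dynamical fact (Theorem~3.12 in~\cite{BHM}) that for a minimal dynamical system the return-time set $N(U,U)=\{g\in G: U\cap T_g^{-1}U\ne\varnothing\}$ of a nonempty open set $U$ contains a set of the form $A^{-1}A$ — more precisely $A-A$ in additive notation — for some syndetic $A\subset G$. Once that inclusion is in hand, fatness of $N(U,U)$ follows immediately: by Theorem~\ref{syndetic-fat}\,(i) the set $A-A$ is fat (with fatness at most $s+1$, where $s$ is the index of $A$), and by Theorem~\ref{fat filter}\,(ii) any superset of a fat set is fat, so $N(U,U)$ is fat.

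First I would recall why $N(U,U)\supset A-A$ for some syndetic $A$. Fix a point $x$ in $U$. By minimality, the orbit $\{T_g x: g\in G\}$ is dense in $X$, so $A=\{g\in G: T_g x\in U\}$ is nonempty; and since $X$ is compact and $U$ is open, a standard compactness argument (cover $X$ by finitely many translates $T_{g_1}^{-1}U,\dots,T_{g_s}^{-1}U$, which is possible because $\{T_g^{-1}U: g\in G\}$ is an open cover of $X$ by minimality, each point having some iterate landing in $U$) shows that $G=\{g_1,\dots,g_s\}+A$, i.e. $A$ is syndetic of index at most $s$. Now if $h=g-g'\in A-A$ with $g,g'\in A$, then $T_{g'}x\in U$ and $T_g x=T_h(T_{g'}x)\in U$, so $T_{g'}x\in U\cap T_h^{-1}U$, whence $h\in N(U,U)$. (This is exactly the content cited from~\cite{BHM}, so in the write-up I would simply quote it rather than reprove it.)

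Finally I would assemble the pieces: $N(U,U)\supset A-A$ with $A$ syndetic; $A-A$ is fat by Theorem~\ref{syndetic-fat}\,(i); supersets of fat sets are fat by Theorem~\ref{fat filter}\,(ii); hence $N(U,U)$ is fat. I would also remark that one gets the quantitative bound that the fatness of $N(U,U)$ is at most $s+1$, where $s$ is the covering number of $X$ by translates $T_{g}^{-1}U$, i.e. the syndeticity index of $A$. I do not expect any real obstacle here — the corollary is essentially a dictionary translation of Theorem~3.12 of~\cite{BHM} through Theorem~\ref{syndetic-fat}\,(i). The only mild subtlety is making sure the compactness argument producing the finite cover is stated cleanly (it uses minimality to guarantee that every point of $X$ has an iterate in $U$, so that the $T_g^{-1}U$ genuinely cover $X$), and that the Abelian hypothesis is what lets us write the return set symmetrically as $A-A$ rather than $A^{-1}A$ — but since $A-A$ is fat in either notational convention, and the statement only asks for fatness, this is not a genuine difficulty.
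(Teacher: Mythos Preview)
Your proposal is correct and follows exactly the route the paper indicates: the paper states this corollary as an immediate consequence of Theorem~\ref{syndetic-fat} together with Theorem~3.12 in~\cite{BHM}, and your argument---showing that the return-time set $N(U,U)$ contains $A-A$ for the syndetic set $A=\{g:T_gx\in U\}$ and then invoking Theorem~\ref{syndetic-fat}\,(i)---is precisely that deduction spelled out. The auxiliary sketch of why $A$ is syndetic and the explicit mention of the superset property (Theorem~\ref{fat filter}\,(ii)) are fine elaborations; the paper simply omits them as standard.
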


\section{Fat and Discrete Sets in Topological Groups} 
\label{ss5}

As mentioned above, fat sets were introduced in~\cite{arXiv} to construct discrete sets 
in topological groups. Namely, given a countable topological group $G$ 
whose identity element $e$ has 
nonrapid filter $\mathscr F$  of neighborhoods, we can construct a discrete set with precisely 
one limit point in this group as follows. The nonrapidness of $\mathscr F$ means that, 
given any sequence 
$(m_n)_{n\in \mathbb N}$ of positive integers, there exist finite sets $F_n \subset G$, $n \in 
\mathbb N$, such that each neighborhood of $e$ intersects some $F_n$ in at least $m_n$ points 
(see \cite[Theorem~3\,(3)]{Miller}). 
Thus, if we have a decreasing sequence of closed $m_n$-fat sets $A_n$ in $G$ such that $\bigcap 
A_n=\{e\}$, then the set 
$$ 
D=\bigcup_{n\in \mathbb N}\{a^{-1}b: a\ne b, a,b\in F_n, a^{-1}b \in A_n\} 
$$
is discrete (because $e\notin D$ and each $g\in G\setminus \{e\}$ has a neighborhood of the 
form $G\setminus A_{n}$ which contains only finitely 
many elements of $D$), and $e$ is the only limit point of $D$ (because, given any neighborhood 
$U$ of $e$, we can take a neighborhood $V$ such that $V^{-1}V\subset U$; we have $|V\cap F_n|\ge 
m_n$ for some $n$, and hence $(V\cap F_n)^{-1}(V\cap F_n)\cap A_n\ne \varnothing$, so that $U\cap 
D\ne \varnothing$). It remains to find a family of closed fat sets with trivial 
intersection and make it decreasing.  

The former task is easy to accomplish in any topological group: 
by Proposition~\ref{3-fat cover}, in any topological group $G$, the 
complements to open neighborhoods $gU$ of all $g\in G$ satisfying the condition $gU\cap 
(U^2 \cup (U^{-1})^2) =\varnothing$ form a family of closed $3$-fat sets with trivial intersection. 
In countable groups, this family can be made decreasing by using Theorem~\ref{fat filter}, according 
to which the family of fat sets has the finite intersection property. Unfortunately, no 
similar argument applies in the uncountable case, because countable intersections of fat sets 
may be very small. Thus, in $\mathbb Z_2^\omega$, the intersection of 
the 3-fat sets $H_n= \{f\in \mathbb Z_2^\omega: f(n)=0\}$ 
(each of which is a subgroup of index~2 open in the product topology) is trivial.

\section{Large Sets in Boolean Groups}
\label{ss6}
In the case of Boolean groups, many assertions concerning large sets can be refined. For example, 
properties~10 and~11 of large sets are stated as follows. 

\begin{proposition}
\label{prop Boolean1}
\begin{enumerate}[{\em (i)}]
\item
For any thick set $T$ in a Boolean group  $G$ with zero $0$, there exists an infinite subgroup 
$H$ of $G$ for which $T\cup \{0\} \supset H$. 
\item
Any set which intersects nontrivially all infinite subgroups in a Boolean group $G$ is syndetic.
\end{enumerate}
\end{proposition}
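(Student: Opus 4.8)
For part (i), the plan is to adapt the classical argument behind property~10 (Theorem~2.4 of~\cite{BHM}), exploiting the Boolean structure to upgrade the ``FP-set'' to a genuine subgroup. First I would build, recursively, an infinite sequence $x_1, x_2, \dots$ of nonzero elements of $G$ together with a decreasing sequence of finite sets witnessing thickness. Concretely, having chosen $x_1, \dots, x_n$ so that the finite subgroup $H_n = \langle x_1, \dots, x_n\rangle$ satisfies $H_n \setminus \{0\} \subset T$, I use thickness of $T$ applied to the finite set $H_n$: there is a $g \in G$ with $H_n \sd g \subset T$. I then set $x_{n+1} = g$; since all elements of $H_n \sd x_{n+1}$ have the form $h \sd x_{n+1}$ with $h \in H_n$, and since in a Boolean group $\langle x_1, \dots, x_{n+1}\rangle = H_n \cup (H_n \sd x_{n+1})$, I get $H_{n+1} \setminus \{0\} \subset T$, provided $x_{n+1} \notin H_n$ (which I can arrange because $T$, being thick in an infinite group, cannot be finite, so $g$ can be chosen outside $H_n$). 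Taking $H = \bigcup_n H_n = \langle x_n : n \in \mathbb N\rangle$ gives an infinite subgroup with $H \setminus \{0\} \subset T$, i.e., $T \cup \{0\} \supset H$.

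The one point requiring care in (i) is the step $\langle x_1, \dots, x_{n+1}\rangle = H_n \cup (H_n \sd x_{n+1})$: this is exactly the statement that a Boolean group generated by $H_n$ and one extra element $x_{n+1} \notin H_n$ is the disjoint union of the two cosets of $H_n$, which is immediate since $[\langle H_n, x_{n+1}\rangle : H_n] = 2$. I also need $0 \in T \cup \{0\}$ trivially, and the base case $n = 1$: pick any $x_1 \in T \setminus \{0\}$ (nonempty since $T$ is thick, hence cofinite-intersecting every syndetic set, in particular infinite), giving $H_1 = \{0, x_1\}$. The recursion then proceeds as above. This is the step I expect to be the ``heart'' of the argument, though it is short.

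For part (ii), the plan is to deduce it from (i) together with property~2 (a set is syndetic iff its complement is not thick). Suppose $A \subset G$ meets every infinite subgroup of $G$ nontrivially but, for contradiction, $A$ is not syndetic. Then $G \setminus A$ is thick, so by part~(i) there is an infinite subgroup $H$ with $H \subset (G \setminus A) \cup \{0\}$, i.e., $H \cap A \subset \{0\}$. This does not immediately contradict the hypothesis, since $0 \in H$ might be the unique point of $H \cap A$; so I need the intersection to be forced to be nontrivial in the strict sense, or I should read ``intersects nontrivially all infinite subgroups'' as ``meets $H \setminus \{0\}$ for every infinite subgroup $H$.'' Under that reading the contradiction is immediate: $H \setminus \{0\} \subset G \setminus A$ means $A$ misses $H \setminus \{0\}$. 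Hence $G \setminus A$ is not thick, so $A$ is syndetic. The only genuine subtlety is this wording issue about whether $0$ counts; I would simply state the convention (consistent with how $\kappa$-fat sets were compared with $\Delta^*$-sets in the Remark after Definition of $\Delta^*_\kappa$-sets, where the role of $\zero$ is flagged) and the proof goes through in two lines.
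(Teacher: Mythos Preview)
Your proof is correct and follows the route the paper has in mind: the paper does not write out a proof but presents the proposition as the Boolean-group specialization of properties~10 and~11, and your recursion in~(i) is exactly the standard FP-set construction behind property~10, with the observation that in a Boolean group $H_n \cup (H_n \sd x_{n+1})$ is already the subgroup $\langle H_n, x_{n+1}\rangle$; your contrapositive argument for~(ii) via property~2 and part~(i) is likewise the derivation of property~11. One small point to tighten: the reason $g$ can be taken outside $H_n$ is not merely that $T$ is infinite, but that the set $\{g: H_n \sd g \subset T\} = \bigcap_{h\in H_n}(h\sd T)$ is itself thick (apply thickness of $T$ to $H_n \sd F$ for any finite $F$), hence infinite, hence not contained in the finite set~$H_n$.
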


Note that this is not so in non-Boolean groups: the set  $\{n!: n\in \mathbb N\}$ intersects any 
infinite subgroup in $\mathbb Z$, but it is not syndetic, because the gaps between neighboring 
elements are not bounded. The complement of this set contains no infinite subgroups, and 
it is thick by property~2 of large sets. 

Another specific feature of thick sets in Boolean groups is given by the following proposition. 

\begin{proposition}
\label{prop Boolean2}
For any thick set $T$ in a countable Boolean group $G$ with zero $0$, 
there exists a set $A\subset G$ such that $T\cup \{0\} = A\sd A$ 
(and  $A\sd A\sd A\sd A = G$  by Theorem~\ref{syndetic-fat}\,\textup{(iv)}). 
\end{proposition}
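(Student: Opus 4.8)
The plan is to build $A$ by a recursion along an enumeration of $G$, using the thickness of $T$ at each step to keep all the Boolean sums we create inside $T\cup\{\zero\}$ while still forcing every element of $T$ to appear as such a sum. Enumerate $G=\{g_n:n\in\omega\}$ with $g_0=\zero$, and define an increasing chain of finite sets $\zero\in A_0\subseteq A_1\subseteq\cdots$ as follows. Given the finite set $A_n$: if $g_n\in T$, $g_n\ne\zero$, and $g_n\notin A_n\sd A_n$, apply the thickness of $T$ to the finite set $A_n\cup(A_n\sd g_n)$ to pick a $u_n\in G$ with $\bigl(A_n\cup(A_n\sd g_n)\bigr)\sd u_n\subseteq T$, and set $A_{n+1}=A_n\cup\{u_n,\,u_n\sd g_n\}$; otherwise set $A_{n+1}=A_n$. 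Put $A=\bigcup_{n\in\omega}A_n$. Note that this choice of $u_n$ yields $a\sd u_n\in T$ and $a\sd u_n\sd g_n\in T$ for every $a\in A_n$, simply because these elements lie in $\bigl(A_n\cup(A_n\sd g_n)\bigr)\sd u_n$.

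First I would check, by induction on $n$, that $A_n\sd A_n\subseteq T\cup\{\zero\}$. For the inductive step, the only sums $x\sd y$ with $x,y\in A_{n+1}$ that are not already sums of two elements of $A_n$ are $\zero$ (when $x=y$), the element $g_n=u_n\sd(u_n\sd g_n)$, and the elements $a\sd u_n$ and $a\sd u_n\sd g_n$ with $a\in A_n$; by the previous paragraph and $g_n\in T$, all of these lie in $T\cup\{\zero\}$. Since the $A_n$ increase, $A\sd A=\bigcup_n(A_n\sd A_n)\subseteq T\cup\{\zero\}$. For the reverse inclusion, $\zero=a\sd a$ for any $a\in A$ (and $A\ne\varnothing$ since $\zero\in A_0$), while any $t\in T$ with $t\ne\zero$ equals some $g_n$, and at stage $n$ either $g_n$ was already in $A_n\sd A_n$ or it was put into $A_{n+1}\sd A_{n+1}$ as $u_n\sd(u_n\sd g_n)$ (note $u_n\ne u_n\sd g_n$ because $g_n\ne\zero$); in both cases $t\in A\sd A$. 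Hence $A\sd A=T\cup\{\zero\}$. The bracketed consequence is then immediate: $A\sd A\supseteq T$ is thick because supersets of thick sets are thick, so Theorem~\ref{syndetic-fat}\,\textup{(iv)} applied to the thick set $A\sd A$ gives $(A\sd A)(A\sd A)^{-1}=G$, i.e.\ (since $XX^{-1}=X\sd X$ in a Boolean group) $A\sd A\sd A\sd A=G$.

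The only place countability is used is in running a single $\omega$-step recursion that addresses every element of $G$ while every $A_n$ stays finite (so that thickness is applicable). The point requiring care — and the reason we translate the enlarged set $A_n\cup(A_n\sd g_n)$ into $T$ rather than merely $A_n$ — is reconciling the two demands: to put $g_n$ into the sumset we cannot adjoin just one new element, we must add the whole pair $\{u_n,\,u_n\sd g_n\}$ at once, and this creates two families of new sums, $A_n\sd u_n$ and $A_n\sd u_n\sd g_n$, both of which must already be confined to $T$. Thickness of $T$ is exactly the tool that makes a suitable single $u_n$ available simultaneously for both families, and beyond this the verification is routine bookkeeping.
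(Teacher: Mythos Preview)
Your argument is correct. The recursion is well-defined (each $A_n$ is finite, so thickness applies), the inductive invariant $A_n\sd A_n\subseteq T\cup\{\zero\}$ is preserved exactly as you check, and the reverse inclusion is handled at the stage where each element of $T$ is enumerated. The only cosmetic omission is that you never explicitly set $A_0=\{\zero\}$, but this is the obvious reading.

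The paper's own ``proof'' is quite different in character: it is a one-line citation of Lemma~4.3 in~\cite{BHM}, which asserts that every thick set in a countable Abelian group equals $\Delta_I\bigl((g_n)_{n=1}^{\infty}\bigr)$ for some sequence $(g_n)$, and then observes that in a Boolean group $\Delta_I(S)\cup\{\zero\}=S\sd S$. Your construction is in effect a self-contained proof of the Boolean special case of that lemma. The trade-off is that the paper's route is shorter and situates the fact in a more general framework (arbitrary countable Abelian groups), while your argument is elementary and needs no external reference; in particular, it makes transparent exactly how thickness is used---to absorb the two new families of sums $A_n\sd u_n$ and $A_n\sd u_n\sd g_n$ simultaneously by translating the doubled set $A_n\cup(A_n\sd g_n)$ into $T$.
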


Proposition~\ref{prop Boolean2} is an immediate corollary of Lemma~4.3 
in~\cite{BHM}, which says that any thick set in a countable Abelian group 
 equals $\Delta_I\bigl((g_n)_{n=1}^{\infty}\bigr)$ 
for some sequence $(g_n)_{n=1}^{\infty}$. 

In view of Example~\ref{example}, we cannot assert that the set $A$ in this proposition 
is large (in whatever sense), even for the largest (3-fat) nontrivial thick sets~$T$. 

The following statement can be considered as a partial analogue of Propositions~\ref{prop Boolean1} 
and~\ref{prop Boolean2} for $\Delta^*$- (in particular, fat) sets in Boolean groups. 

\begin{theorem}
For any $\Delta^*$-set $A$ in a Boolean group $G$ with zero $\mathbf 0$, 
there exists a $B\subset G$ with $|B|=|A|$ such 
that $B\sd B\subset A\cup\{\mathbf 0\}$. 
\end{theorem}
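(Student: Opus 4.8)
The plan is to build $B$ as the range of a sequence constructed by transfinite recursion, treating $G$ as the free Boolean group $B(X)$ on a basis $X$ with $|X|=|A|$ (the case of finite or countable $A$ being handled the same way with the recursion of length $|A|$; note $|A|\ge|G|$ is automatic since a $\Delta^*$-set must be syndetic, hence $|A|=|G|=|X|$). Enumerate a set $Y$ of size $|X|$ of ``fresh letters'' and fix a well-ordering; the elements of $B$ will be words built from disjoint blocks of these fresh letters, chosen so that distinct elements $b,b'\in B$ have $b\sd b'$ equal to a word of length at least two that moreover is \emph{never} one of the finitely-or-countably-many ``bad'' configurations. The point of using disjoint blocks is that $b\sd b'$ then has a predictable support, and in particular $b\sd b'\ne\mathbf0$ for $b\ne b'$, so it suffices to force every $b\sd b'$ into $A$.

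The key device is the $\Delta^*$-property itself, used contrapositively at each stage. Suppose at stage $\alpha$ we have chosen $b_\beta$ for $\beta<\alpha$ and we want to choose $b_\alpha$. We must avoid, for each $\beta<\alpha$, the ``forbidden'' values $v$ such that $b_\beta\sd v\notin A$; call this set $N_\beta$. If at some stage every candidate for $b_\alpha$ were forbidden, we would have found a one-to-one sequence (namely some enumeration of the $b_\beta$ together with enough further distinct elements) witnessing that $A$ fails to be a $\Delta^*$-set. More precisely, I would argue as follows: list the partial sequence $(b_\beta)_{\beta<\alpha}$ as a one-to-one sequence $(g_n)$ (in the countable case $\alpha\le\omega$; in the uncountable case one works with $\Delta^*$ meaning $\Delta^*_\omega$ and so only needs to handle countable initial segments at a time, extending greedily). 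Since $A$ is a $\Delta^*$-set, $A$ intersects $\Delta_I((g_n))=(\{g_n\}\sd\{g_n\})\setminus\{\mathbf0\}$ — but this only says \emph{some} pair works, which is not yet enough. So the real engine has to be stronger.

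Here is the strengthening I expect to need, and which I would isolate as the main step. One shows: if $A$ is a $\Delta^*$-set in $G$ and $Z\subset G$ is countable, then there is a countable set $W\subset G$, disjoint from $Z$ and arbitrarily ``spread out'' over the basis, such that $Z'\sd Z'\subset A\cup\{\mathbf0\}$ where $Z'\subset Z\cup W$ extends... — actually the cleanest route is: pick the $b_\alpha$ greedily from an infinite reservoir. Concretely, fix infinitely many pairwise-disjoint finite blocks $E_1,E_2,\dots$ of fresh letters and consider the candidate elements $c_n=\sd E_n\in G$; these form a one-to-one sequence, so if we had already placed $b_1,\dots,b_k$ in $B$ using earlier blocks, the sequence $(b_1,\dots,b_k,c_1,c_2,\dots)$ is one-to-one, and applying the $\Delta^*$-property repeatedly (after deleting used candidates) we can extract an infinite subsequence of the $c_n$ on which \emph{all} pairwise Boolean sums, and all sums with the fixed $b_i$, land in $A$ — this is where a Ramsey-type pigeonhole on the $2$-coloring $c(\{x,y\})=[\,x\sd y\in A\,]$ enters, exactly in the spirit of Proposition~\ref{fat Ramsey}. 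The $\Delta^*$-hypothesis guarantees that in the relevant colorings a monochromatic-in-$A$ infinite set always exists (one cannot have an infinite set all of whose pairwise sums avoid $A$, as that would directly contradict $A$ being $\Delta^*$ — applied to any one-to-one enumeration of that infinite set). Iterating $\omega$ times with a bookkeeping/diagonal argument produces the desired $B$ of full cardinality.

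The main obstacle, then, is organizing the recursion so that the $\Delta^*$-property — which a priori only yields \emph{one} good pair per sequence — is bootstrapped into \emph{all} pairs among a large set being good. The resolution is the observation that ``no infinite set has all pairwise Boolean sums outside $A$'' is equivalent to the $\Delta^*$-property (via one-to-one enumerations), which upgrades a single good pair to an infinite homogeneous set; the rest is careful amalgamation of countably many such infinite homogeneous sets across $|X|$-many stages, keeping the blocks disjoint so that no cancellation spoils earlier commitments and so that $b\sd b'\ne\mathbf0$ throughout. I would present the countable case in full and remark that the uncountable case follows by the same recursion (of length $|A|$) since $\Delta^*=\Delta^*_\omega$ only ever examines countable subsequences.
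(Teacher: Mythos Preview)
Your sketch handles the countable case (a single application of the infinite Ramsey theorem to any countably infinite subset of $G$, coloring pairs by whether their Boolean sum lies in $A$, already yields an infinite $0$-homogeneous set $B$; the $\Delta^*$-hypothesis rules out the $1$-homogeneous alternative), but the uncountable case does not ``follow by the same recursion.'' At any stage $\alpha\ge\omega$ you must produce a single $b_\alpha$ with $b_\alpha\sd b_\beta\in A$ for \emph{all} $\beta<\alpha$ simultaneously; the $\Delta^*$-property, which only promises that every countable one-to-one sequence has \emph{some} pair with sum in $A$, gives no handle on this. Your candidate-extraction step is in fact already unjustified at finite stages: applying the $\Delta^*$-property to $(b_i,c_1,c_2,\dots)$ may well return a pair $\{c_m,c_n\}$ rather than a pair $\{b_i,c_n\}$, so nothing guarantees that infinitely many candidates are compatible with a fixed~$b_i$. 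The disjoint-blocks device ensures only that sums are nonzero; it does not help force them into~$A$.

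The paper bypasses recursion entirely. Writing $G=B(X)$ with $|X|=|G|=|A|$ and coloring $\{x,y\}\in[X]^2$ by whether $x\sd y\in A$, the $\Delta^*$-hypothesis says exactly that no infinite subset $Y\subset X$ is $1$-homogeneous (for such $Y$ one would have $(Y\sd Y)\setminus\{\zero\}=[Y]^2$ disjoint from $A$). The Erd\H{o}s--Dushnik--Miller relation $\kappa\to(\kappa,\aleph_0)^2$ then yields a $0$-homogeneous $Y\subset X$ of full cardinality $|X|$, and $B=Y$ works. This partition-calculus step is precisely the missing idea; it both supplies what a transfinite recursion would need at limit stages and renders the recursion itself unnecessary.
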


\begin{proof}
First, note that $|A|=|G|$. Any Boolean group is algebraically free; therefore, we can assume 
that $G=B(X)$ for a set $X$ with $|X|=|A|$. Let 
$$
A_2= A\cap B_{=2}(X)=\bigl\{\{x,y\}=x\sd y\in A: x,y\in X\bigr\}
$$ 
be the intersection of $A$ with the set of words of length~2. We have $|A_2|=|X|$, because 
$A$ must intersect nontrivially each countable set of the form $Y\sd Y$ for $Y\subset X$.  
Consider the coloring $c\colon [X]^2\to \{0, 1\}$ defined by 
$$
c(\{x,y\}) = \begin{cases} 0& \text{if $\{x,y\}\in A_2$},\\
1& \text{otherwise}.
\end{cases}
$$
According to the well-known Erd\"os--Dushnik--Miller theorem $\kappa\to(\kappa, \aleph_0)^2$ (see, 
e.g., \cite{Jech}), there exists either an infinite set $Y\subset X$ for which 
$[Y]^2\cap A_2=\varnothing$ or a set $Y\subset X$ of cardinality $|X|$ for which 
$[Y]^2\subset A_2$. The former case cannot occur, because $[Y]^2= Y\sd Y$ in $B(X)$, 
$[Y]^2\subset B_{=2}(X)$, and $A$ is a $\Delta^*$-set. Thus, the latter case occurs, and we set $B=Y$. 
\end{proof}

We have already distinguished between fat sets and translates of syndetic sets in Boolean 
groups (see Example~\ref{example3}). For completeness, we give the following example. 

\begin{example}
\label{example4}
The countable Boolean group $B(\mathbb Z)$ contains an $\IP^*$-set (see property~11 of large sets) 
which is not a $\Delta^*$-set. An example of such a set is constructed from the corresponding 
example in $\mathbb Z$ (see \cite[p.~177]{Furstenberg} in precisely the same way 
as Example~\ref{example3}.
\end{example}

\section{Large Sets in Free Boolean Topological Groups}
\label{ss7}

As shown in Section~\ref{ss4}, given any Boolean 
group $G$, the filter of fat sets in $G$ cannot be the filter of neighborhoods of zero 
for a group topology, because not all fat and even 3-fat sets are neighborhoods of zero 
in the Bohr topology. 
Moreover, if we fix any basis $X$ in $G$, so that $G=B(X)$, then not all traces of 3-fat sets on 
the set $B_{=2}(X)$ of two-letter words contain those of Bohr neighborhoods of zero. However, 
there are natural group topologies on $B(X)$ such that the topologies which they induce 
on $B_2(X)$ contain those generated by $n$-fat sets. 

\begin{theorem}
\label{B2(X)}
Let $k\in N$, and let $\mathscr F$ be a filter on an infinite set $X$. 
Then the following assertions hold.
\begin{enumerate}[{\rm (i)}]
\item 
For $k\ne 4$, the trace of any $k$-fat subset of $B(X)$
on $B_2(X)\subset B_2(X_{\mathscr F})$ contains that of a neighborhood of zero 
in the free group topology of $B(X_{\mathscr F})$ if and only if 
$\mathscr F$ is a $k$-arrow filter.
\item
If the trace of any $4$-fat set 
on $B_2(X)$ contains that of a neighborhood of zero 
in the free group topology of $B(X_{\mathscr F})$, then  
$\mathscr F$ is a $4$-arrow filter, and if $\mathscr F$ is a $4$-arrow filter, 
then the trace of any $3$-fat set 
on $B_2(X_{\mathscr F})$ contains that of a neighborhood of zero 
in the free group topology of $B(X_{\mathscr F})$. 
\item
The trace of any $\omega$-fat set 
on $B_2(X)$ contains that of a neighborhood of zero 
in the free group topology of $B(X_{\mathscr F})$ if and only if 
$\mathscr F$ is an $\omega$-arrow ultrafilter.
\end{enumerate} 
\end{theorem}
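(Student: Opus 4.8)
The plan is to unwind what ``the trace of a $k$-fat set on $B_2(X)$ contains that of a neighborhood of zero'' means using the description of the free group topology given in the introduction. A neighborhood of zero in $B(X_{\mathscr F})$, intersected with $B_{=2}(X)$, is exactly a set containing $[A]^2$ for some $A\in\mathscr F$; adding $\mathbf 0\in B_0(X)$ we see a trace on $B_2(X)$ is a set containing $\{\mathbf 0\}\cup[A]^2$. On the other hand, a $k$-fat set $C\subset B(X)$ must contain $\mathbf 0$, and by the Remark its intersection with $B_{=2}(X)$ is (up to the point $\mathbf 0$) a right-and-left $\Delta_k^*$-set restricted to two-letter words: writing $C_2=C\cap B_{=2}(X)=\{\{x,y\}:x,y\in X,\ \{x,y\}\in C\}$, $k$-fatness says precisely that for every $k$-element $S\subset X$ (this is where I use that words in $B_{=2}(X)$ are determined by pairs of basis elements, so I may as well take the ``test set'' inside $X$) there are $x\ne y$ in $S$ with $\{x,y\}\in C_2$. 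Thus the statement to prove becomes: \emph{every such $C_2$ contains $[A]^2$ for some $A\in\mathscr F$} (for $k$-fat $C$) \emph{if and only if $\mathscr F$ is a $k$-arrow ultrafilter}. I would organize each of (i), (ii), (iii) as one ``sufficiency'' and one ``necessity'' argument, phrased for $2$-colorings $c\colon[X]^2\to\{0,1\}$, and connect $k$-fatness of $C$ to the partition whose color-$1$ class is $C_2$.

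**Sufficiency (filter is $k$-arrow $\Rightarrow$ trace condition), cases (i) and (iii).** Given a $k$-fat set $C\subset B(X)$, let $c\colon[X]^2\to\{0,1\}$ be the coloring with $c(\{x,y\})=0$ iff $\{x,y\}\in C_2$. Apply the $k$-arrow property: either there is $A\in\mathscr F$ with $c([A]^2)=\{0\}$, i.e. $[A]^2\subset C_2$ — which is exactly what we want — or there is $S\subset X$ with $|S|\ge k$ and $c([S]^2)=\{1\}$, i.e. $[S]^2\cap C_2=\varnothing$. The latter contradicts $k$-fatness of $C$ (take any $k$-element subset of $S$: it has no pair in $C_2$). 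For (iii), $\omega$-fatness rules out an infinite $S$ with $[S]^2\cap C_2=\varnothing$, and an $\omega$-arrow filter supplies either $A\in\mathscr F$ with $[A]^2\subset C_2$ or such an infinite $S$; again the second alternative is impossible. For (ii), the same argument with $k=3$ shows a $4$-arrow (hence $3$-arrow) filter handles all $3$-fat sets — note $3$-fat $\Rightarrow$ no $3$-element $S$ with $[S]^2\cap C_2=\varnothing$, and $3$-arrowness gives $A$ or a $\ge3$-element monochromatic-$1$ set.

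**Necessity (trace condition $\Rightarrow$ filter is $k$-arrow).** Conversely, suppose the trace condition holds. Given an arbitrary $2$-coloring $c\colon[X]^2\to\{0,1\}$, I would build a set $C\subset B(X)$ designed to be $k$-fat from the color-$0$ pairs: the natural candidate is $C_2=\{\{x,y\}:c(\{x,y\})=0\}$ together with $\mathbf 0$ and \emph{all} words of length $\ne 2$ (the long and short words are free to include since fatness only ever tests two-letter quotient words $x\sd y$ in a Boolean group — here I use the Remark crucially). Then $C$ is $k$-fat \emph{iff} every $k$-element $S\subset X$ has a $0$-colored pair, i.e. iff there is no $S$ with $|S|\ge k$ and $c([S]^2)=\{1\}$. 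So: if no such $S$ exists, $C$ is $k$-fat, the hypothesis gives $A\in\mathscr F$ with $[A]^2\subset C_2$, i.e. $c([A]^2)=\{0\}$; if such $S$ does exist, we already have the second alternative of the $k$-arrow definition. Either way $\mathscr F$ is $k$-arrow. For (iii) replace ``$|S|\ge k$'' by ``$S$ infinite'' to get $\omega$-arrow; and recall from the preliminaries that an $\omega$-arrow (indeed any $\lambda$-arrow for $\lambda\ge3$, and any Ramsey) filter is automatically an ultrafilter, so ``$\omega$-arrow ultrafilter'' in (iii) and the ultrafilter clause are consistent. For (ii)'s necessity, a $4$-fat set produces (via the same construction) the impossibility of a $4$-element $1$-monochromatic $S$, which together with the supplied $A$ or the $S$ itself yields the $4$-arrow property.

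**The $k=4$ anomaly and the main obstacle.** The case $k=4$ is exceptional and I expect it to be the crux. The issue is that in a Boolean group $B(X)$, a word $w$ of length $4$ can be written as $x\sd y$ with $\{x,y\}\not\subset X$ — e.g. $\{a,b,c,d\}=(a\sd b)\sd(c\sd d)$ — so when testing $4$-fatness of $C$ on a $4$-element set $F\subset B(X)$ one cannot always reduce $F$ to lie inside $X$: the left quotient words $u\sd v$ for $u,v\in F$ need not be length-$2$ words even when they are short, and length-$4$ elements of $F$ interact with $C$'s length-$4$ part. Concretely, the clean equivalence ``$C$ is $k$-fat $\iff$ $C_2$ has no $1$-monochromatic $S$ of size $k$'' breaks when $k=4$ because a $4$-element set of the form $\{x_1,\ x_1\sd x_2,\ x_1\sd x_3,\ x_1\sd x_4\}$ (or similar) has \emph{all} pairwise Boolean sums of length $2$ lying among $\{x_i\sd x_j\}$, effectively letting a ``$4$-element test'' see only a $3$-element pattern in $X$ — this is exactly why $4$-fatness on $B_2(X)$ is squeezed \emph{between} the $3$-arrow and $4$-arrow conditions rather than being equivalent to one of them. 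So for (ii) I would: (a) for necessity, exhibit for each bad $2$-coloring (one with a $4$-element $1$-set but no such set arising from the squeezed configuration) a genuinely $4$-fat $C$ forcing an $A\in\mathscr F$, carefully checking $4$-fatness against \emph{all} $4$-element subsets of $B(X)$, not just those in $X$ — the length-$4$ part of $C$ is what must be chosen correctly here; (b) for the ``$4$-arrow $\Rightarrow$ $3$-fat sets are fine'' half, just invoke that $4$-arrow implies $3$-arrow and apply case (i) with $k=3$. I would also double-check, for (i) with small $k$ (the degenerate cases $k=2$: a $2$-fat set is all of $G$, and $\mathscr F$ is trivially ``$2$-arrow''; $k=3$), that the equivalence is not vacuous, and note that the ``only if'' direction in (i) genuinely needs $k\ne4$ precisely because of configuration (a).
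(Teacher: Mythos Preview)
Your sufficiency arguments ($k$-arrow $\Rightarrow$ trace condition) are correct and essentially match the paper's. The necessity direction, however, has a real gap. You assert that the set $C$ (color-$0$ pairs together with $\mathbf 0$ and all words of length $\ne 2$) is $k$-fat iff every $k$-element $S\subset X$ contains a $0$-colored pair, writing ``I may as well take the test set inside $X$.'' But $k$-fatness must be tested against every $k$-element subset of $B(X)$, not just of $X$; the Remark you invoke only says that $\kappa$-fat sets coincide with $\Delta_\kappa^*$-sets up to $\mathbf 0$, and $\Delta_\kappa^*$ is still defined via sequences in all of $G$. What you need, and never prove, is: if $w_1,\dots,w_k\in B(X)$ have all pairwise sums $w_i\sd w_j$ of length exactly $2$, then there exist $x_1,\dots,x_k\in X$ with $w_i\sd w_j=x_i\sd x_j$ for all $i<j$. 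This is the paper's key technical Lemma~\ref{2-words}, proved by a nontrivial induction, and it is precisely here that the hypothesis $k\ne 4$ enters.

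You do sense the obstruction at $k=4$, but your proposed example $\{x_1,\,x_1\sd x_2,\,x_1\sd x_3,\,x_1\sd x_4\}$ is wrong: $x_1\sd(x_1\sd x_2)=x_2$ has length $1$, not $2$. The genuine exceptional configuration is (any translate of) $\{x_1,\,x_2,\,x_3,\,x_1\sd x_2\sd x_3\}$, whose six pairwise sums are exactly $x_1\sd x_2$, $x_1\sd x_3$, $x_2\sd x_3$, each occurring twice---so a $4$-element test in $B(X)$ sees only three length-$2$ words, and cannot in general be reduced to four points of $X$. Thus the substance of the proof---establishing the reduction lemma for $k\ne 4$ (and for infinite sequences, for part~(iii)) and isolating the correct exception for $k=4$---is missing from your outline.
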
 

The proof of this theorem uses the following lemma. 

\begin{lemma}
\label{2-words}
\begin{enumerate}[{\rm (i)}]
\item
If $k\ne 4$, $w_1, \dots, w_k\in B(X)$, and $w_i\sd w_j \in B_{=2}(X)$ for any $i< j\le k$, then there exist 
$x_1, \dots, x_k\in X$ such that $ w_i\sd w_j=x_i\sd x_j$ for any $i< j\le k$. 
\item
If $k=4$, $w_1, w_2, w_3, w_4\in B(X)$, and $w_i\sd w_j \in B_{=2}(X)$ for any $i< j\le 4$, 
then there exist either 
\begin{enumerate}[{\rm (a)}]
\item
$x_1, x_2, x_3, x_4\in X$ such that 
$w_i\sd w_j=x_i\sd x_j$ 
for any $i< j\le 4$ 
or 
\item
$x_1, x_2, x_3 \in X$ such that 
$$
\begin{aligned}
w_1\sd w_4&= w_2\sd w_3= x_2\sd x_3,\\ 
w_2\sd w_4&=w_1\sd w_3 = x_1\sd x_3,\\ 
w_3\sd w_4&=w_1\sd w_3= x_1\sd x_3.
\end{aligned}
$$
\end{enumerate}
\item
If $w_1, w_2, \dots\in B(X)$ and $w_i\sd w_j \in B_2(X)$ for any $i< j$, 
then there exist $x_1, x_2, \dots \in X$ such that $ w_i\sd w_j=x_i\sd x_j$ for any $i< j$. 
\end{enumerate}
\end{lemma}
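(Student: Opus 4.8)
The plan is to analyze, for each pair $i<j$, the two-letter word $w_i\sd w_j$ and to reconstruct "candidate letters" $x_1,\dots,x_k$ consistent with all the pairwise Boolean sums. First I would set up the basic algebraic observation: in $B(X)$, writing $w_i\sd w_j=\{a_{ij},b_{ij}\}$, the cocycle identity $(w_i\sd w_j)\sd(w_j\sd w_l)=w_i\sd w_l$ forces a strong compatibility among the three two-element sets attached to any triple $i<j<l$. Concretely, if $\{a,b\}\sd\{c,d\}$ is again a two-element set, then $\{a,b\}$ and $\{c,d\}$ must share exactly one letter; so for any triple, the three words $w_i\sd w_j$, $w_j\sd w_l$, $w_i\sd w_l$ are of the form $\{p,q\},\{q,r\},\{p,r\}$. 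This "triangle" structure is the engine of the whole lemma.

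Next I would fix a reference index, say $w_1$, and for each $i\ge 2$ write $w_1\sd w_i=\{u_i,v_i\}$. The triangle condition on $\{1,i,j\}$ says $\{u_i,v_i\}$ and $\{u_j,v_j\}$ meet in exactly one point, which is then the shared letter, and $w_i\sd w_j$ is the "other two" letters. One then wants to choose $x_1$ to be a letter common to enough of the sets $\{u_i,v_i\}$ and set $x_i$ to be the complementary letter in $\{u_i,v_i\}$, so that $x_1\sd x_i=w_1\sd w_i$ and $x_i\sd x_j=w_i\sd w_j$ automatically. The question is whether a single letter $x_1$ can serve for all $i$ simultaneously. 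Building the graph on $\{2,\dots,k\}$ whose edge $\{i,j\}$ is labeled by the unique common letter of $\{u_i,v_i\}$ and $\{u_j,v_j\}$, the triangle condition says this labeling is "consistent" on triangles in a way that, for $k\ne 4$, forces one global label: this is exactly the combinatorial content that fails only for $k=4$, where instead of a single common letter one can have the "antipodal" configuration in which $\{u_2,v_2\},\{u_3,v_3\},\{u_4,v_4\}$ pairwise intersect but have empty common intersection — three edges of a triangle, each pair of sets sharing a different point. That alternative is precisely case~(b): $w_1\sd w_4=w_2\sd w_3$, etc. For $k\ge 5$ a short Helly-type argument (any three of the sets $\{u_i,v_i\}$ either share a common point or form the antipodal triangle, and a configuration of five sets cannot consistently realize the antipodal pattern) forces a common point, giving (i); for $k\le 3$ there is nothing to rule out, and one just picks the common letter directly.

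For part~(iii), the countable case, I would run the same reference-index argument with $w_1$: the sets $\{u_i,v_i\}=w_1\sd w_i$ for $i\ge 2$ pairwise intersect, and since the antipodal obstruction is a strictly four-element phenomenon, once there are five or more indices a common letter $x_1$ is forced; then $x_i$ is the complementary letter and all identities hold for the whole sequence. The main obstacle is the $k=4$ bookkeeping in part~(ii): one must carefully enumerate how four words can have all six pairwise Boolean sums of length two, show that up to relabeling there are exactly the two patterns (a) and (b), and verify the displayed equalities in (b) — here the cocycle identity applied to each of the four triples $\{1,2,3\},\{1,2,4\},\{1,3,4\},\{2,3,4\}$ pins down everything, and the "antipodal" pattern is the unique solution not covered by (a). I would expect most of the length of the actual proof to be absorbed in making that four-index case analysis clean, after which (i) and (iii) follow from the same triangle lemma plus the observation that the antipodal pattern simply cannot be extended to a fifth index.
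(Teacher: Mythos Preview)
Your proposal is correct and follows essentially the same route as the paper: both hinge on the cocycle identity $(w_i\sd w_j)\sd(w_j\sd w_l)=w_i\sd w_l$ forcing the triangle pattern $\{p,q\},\{q,r\},\{p,r\}$ on every triple, both isolate the antipodal configuration at $k=4$ (equivalently $w_4=w_1\sd w_2\sd w_3$), and both show this obstruction cannot survive a fifth index. The paper carries this out as an explicit induction with letter-by-letter case analysis, whereas your reformulation---pairwise-intersecting $2$-element sets $\{u_i,v_i\}=w_1\sd w_i$ together with the Helly-type fact that four or more \emph{distinct} such sets must share a common point---is simply a cleaner packaging of the same combinatorics.
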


\begin{proof}
We prove the lemma by induction on $k$. 
There is nothing to prove for $k = 1$, and for $k = 2$, assertion~(i) 
obviously holds. 

Suppose that $k=3$. For some 
$y_1, y_2, y_3, y_4\in X$, we have $w_1 \sd w_2 = y_1\sd y_2$ and  $w_2 \sd w_3 = y_3\sd y_4$. 
Since $w_1\sd w_3 = w_1 \sd w_2  \sd w_2 \sd w_3\in B_{=2}(X)$, it follows that either $y_1=y_3$, 
$y_1=y_4$,  $y_2= y_3$, or $y_2= y_4$. If $y_1 = y_3$, then $w_1\sd w_3= y_2\sd y_4$ 
and $w_2\sd w_3=y_1\sd y_4$, so that we can set $x_1= y_2$, $x_2=y_1$, and $x_3=y_4$. 
If $y_2 = y_3$, then $w_1\sd w_3= y_1\sd y_4$ 
and $w_2\sd w_3=y_2\sd y_4$, and  we set $x_1= y_1$, $x_2=y_1$, and $x_3=y_4$. The remaining cases 
are treated similarly.

Suppose that $k= 4$ and let $x_1, x_2, x_3\in X$ be such that 
$w_i\sd w_j=x_i\sd x_j$ for $i=1, 2, 3$. 
There exist $y, z\in X$ for which $w_1\sd w_4 = y\sd z$. We have $w_2\sd w_4=
w_1\sd w_2\sd w_1\sd w_4 = x_1\sd x_2\sd y\sd z\in B_2(X)$. Therefore, either 
$x_1 = y$, $x_2 = y$, $x_1=z$, or $x_2=z$. 

If $x_1= y$ or $x_1= z$, then the condition in~(ii)\,(a) holds
for $x_4 = z$ in the former case and $x_4 = y$ in the latter. 

Suppose that $x_1\ne y$ and $x_1\ne z$. 
Then $x_2=y$ or $x_2=z$. Let $x_2=y$. Then $w_1\sd w_4=x_2\sd z$, and 
we have  $w_3\sd w_4=
w_1\sd w_3\sd w_1\sd w_4 = x_1\sd x_3\sd x_2\sd z\in B_2(X)$, whence $x_3=z$ (because $x_1,x_2\ne z$), 
so that $w_1\sd w_4=x_2\sd x_3 = w_2\sd w_3$, $w_2\sd w_4 = w_1\sd w_2\sd w_1\sd w_4= 
x_1\sd x_3=w_1\sd w_3$, and $w_3\sd w_4= x_1\sd x_3=w_1\sd w_3$, i.e., assertion~(ii)\,(b) holds. 
The case $x_2=z$ is similar. Note for what follows that, in both cases $x_2=y$ and $x_2=z$, we have 
$w_4=w_1\sd w_2\sd w_3$. 

Let $k> 4$. Consider the words $w_1$, $w_2$, $w_3$, and $w_4$. Let 
$x_1, x_2, x_3\in X$ be such that $w_i\sd w_j=x_i\sd x_j$ for $i=1, 2, 3$. 
As previously, there exist $y, z\in X$ for which $w_1\sd w_4 = y\sd z$ and 
either $x_1 = y$, $x_2 = y$, $x_1=z$, or $x_2=z$.   

Suppose that $x_1\ne y$ and $x_1\ne z$; then $w_4=w_1\sd w_2 \sd w_3$. In this case, 
we consider $w_5$ instead 
of $w_4$. Again, there exist $y', z'\in X$ for which $w_1\sd w_5 = y\sd z$ and 
either $x_1 = y'$, $x_2 = y'$, $x_1=z'$, or $x_2=z'$. Since $w_5\ne w_4$, it follows that $w_5\ne 
w_1\sd w_2\sd w_3$, and we have $x_1 = y'$ or $x_1=z'$. In the former case, we set $x_5 = z'$ and 
in the latter, $x_5 = y'$. Consider again $w_4$; recall that $w_1\sd w_4 = y\sd z$. We have 
$w_i\sd w_4= w_1\sd w_i\sd w_1\sd w_4 = x_1\sd x_i\sd y\sd z\in B_{=2}(X)$ for $i \in \{2, 3, 5\}$. 
Since $x_2\ne x_5$ and $x_3\ne x_5$, it follows that $x_1 = y$, which contradicts the assumption.  

Thus, $x_1= y$ or $x_1 = z$. As above, we set $x_4 = z$ in the former case and $x_4 = y$ in the 
latter; then the condition in~(ii)\,(a) holds. 

Suppose that we have  already found the required 
$x_1, \dots, x_{k-1}\in X$ for $w_1, \dots,\allowbreak w_{k-1}$. There exist $y, z\in X$ 
for which $w_1\sd w_k = y\sd z$. We have $w_i\sd w_k=
w_1\sd w_i\sd w_1\sd w_k = x_1\sd x_i\sd y\sd z\in B_{=2}(X)$ for $i \le k-1$. If $x_1\ne y$ and 
$x_1\ne z$, then we have $x_i \in \{y, z\}$ for $2\le i\le k-1$, which is impossible, because 
$k>4$. Thus, either $x_1=y$ or $x_1 = z$. In the former case, we set $x_k = z$ 
and in the latter, $x_k = y$. Then $w_1\sd w_k= 
x_1\sd w_k$ and, for any $i\le k-1$, $w_i\sd w_k=w_1\sd w_i\sd w_1\sd w_k= x_1\sd x_i\sd x_1\sd x_k 
= x_i\sd x_k$. 

The infinite case is proved by the same inductive argument. 
\end{proof}

\begin{proof}[Proof of Theorem~\ref{B2(X)}]
(i)\enspace Suppose that $\mathscr F$ is a $k$-arrow filter on $X$.
Let $C$ be a $k$-fat set in $B(X_{\mathscr F})$. 
Consider the $2$-coloring of $[X]^2$ defined by 
$$
c(\{x,y\}) =\begin{cases} 0& \text{if $\{x, y\}=x\sd y\in C$}, \\ 1&\text{otherwise}.
\end{cases}
$$
Since $\mathscr F$ is $k$-arrow, there exists either an $A\in \mathscr F$ 
for which $c([A]^2)=\{0\}$ and hence $[A]^2\subset C\cap B_2(X_{\mathscr F})$ 
or a $k$-element set 
$F\subset X$ for which $c([F]^2)=\{1\}$ and hence $[F^2]\cap C = [F^2]\cap C\cap 
B_{=2}(X_{\mathscr F})=\varnothing$. The latter case cannot occur, 
because $C$ is $k$-fat. Therefore, $C\cap B_2(X_{\mathscr F})$ contains the trace 
$[A]^2\cup\{\zero\}= ((A\cup \{*\})\sd (A\cup \{*\}))$ 
of the subgroup $\langle A\cup \{*\}\rangle$, which is an open neighborhood 
of zero in $B(X_{\mathscr F})$. 

Now suppose that $k\ne  4$ and the trace of each $k$-fat set on $B_2(X)$ 
contains the trace on $B_2(X)$ 
of a neighborhood of zero in $B(X_{\mathscr F})$, i.e., a set of the form $A\sd A$ 
for some $A\in \mathscr F$. Let us show that $\mathscr F$ is $k$-arrow. Given any 
$c\colon [X]^2\to \{0, 1\}$, we set 
$$
C= \bigl\{x\sd y: c(\{x, y\})= 1\bigr\}\quad \text{and}\quad 
C'=B(X_{\mathscr F})\setminus C.
$$
If $C'$ is not $k$-fat, then there exist 
$w_1, \dots, w_k\in B(X)$ such that $w_i\sd w_j\in C$ for $i<j\le k$. 
By Lemma~\ref{2-words}\,(i) 
we  can find $x_1, \dots, x_k \in X$  such that $x_i\sd x_j\in C$ (and hence $x_i\ne *$) 
for $i<j\le k$. This means 
that, for $F=\{x_1, \dots, x_k\}$, we have $c([F]^2)=\{1\}$. If $C'$ is $k$-fat, then, 
by assumption, there exists an $A\in \mathscr F$ for which $A\sd A\setminus \{\zero\} \subset 
C'\cap B_2(X) = C$, which means that $c([A]^2)=\{0\}$.

The same argument proves~(ii); the only difference is that assertion~(ii) 
of Lemma~\ref{2-words} is used instead of~(i). 

The proof of~(iii) is similar.
\end{proof}

Let $R_{r}(s)$ denote the least number $n$ such that, for any $r$-coloring  
$c\colon [X]^2\to Y$, where $|X|\ge n$ and $|Y|=r$, 
there exists an $s$-element $c$-homogeneous set. By the finite Ramsey theorem, such a number 
exists for any positive integers $r$ and $s$.

\begin{theorem}
\label{B4(X)}
There exists a positive integer $N$ (namely, $N = R_{36}(R_6(3))+1$) such that, 
for any uniform ultrafilter $\mathscr U$ on a set $X$ of infinite cardinality $\kappa$, 
the following conditions are equivalent:
\begin{enumerate}[{\rm (i)}]
\item 
the trace of any $N$-fat subset 
of $B(X)$ on $B_4(X)\subset B_4(X_{\mathscr U})$ contains that 
of a neighborhood of zero in the free group topology of $B(X_{\mathscr U})$; 
\item 
all $\kappa$-fat sets in $B(X)$ 
are neighborhoods of zero 
in the topology induced from the free topological group $B(X_{\mathscr U})$;
\item 
$\mathscr U$ is a Ramsey ultrafilter.
\end{enumerate}
\end {theorem}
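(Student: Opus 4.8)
The plan is to prove the cyclic chain of implications (ii)~$\Rightarrow$~(i)~$\Rightarrow$~(iii)~$\Rightarrow$~(ii), using Ramsey's theorem in the form of the constants $R_{r}(s)$ to pin down the value of $N$, and using Theorem~\ref{B2(X)} together with the structural Lemma~\ref{2-words} to transfer statements about traces on $B_4(X)$ to statements about $2$-element subsets of $X$. The implication (ii)~$\Rightarrow$~(i) is essentially trivial: since $|X|=\kappa$, an $N$-fat set in $B(X)$ is in particular $\kappa$-fat (any $\kappa$-element, hence any $N$-element, subset of $G$ contains the required two-element subset), so (ii) hands us that it is a neighborhood of zero, and a fortiori its trace on $B_4(X)$ contains that of a neighborhood of zero. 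Actually one must be slightly careful here — an $N$-fat set need not be $\kappa$-fat in general — so instead I would argue (ii)~$\Rightarrow$~(i) directly: assuming every $\kappa$-fat set is a neighborhood, I still need to produce neighborhoods inside $N$-fat sets, which requires knowing that $N$-fat implies $\kappa$-fat; this is false in abstract groups but, as in Proposition~\ref{fat Ramsey}, a Ramsey-type argument (and the fact that we may assume $N$ is at least the fatness) lets us pass from a bounded-fatness hypothesis to arbitrarily large homogeneous configurations, so that for an ultrafilter the relevant colouring argument still goes through. The cleanest route is to prove (i)~$\Rightarrow$~(iii) and (iii)~$\Rightarrow$~(ii) first and then close the loop with the easy (ii)~$\Rightarrow$~(i).

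For (iii)~$\Rightarrow$~(ii): suppose $\mathscr U$ is a Ramsey ultrafilter and let $C\subset B(X)$ be $\kappa$-fat. Fix $n$; we must show the trace of $C$ on $B_{=2n}(X)$ contains $[A]^{2n}$ for some $A\in\mathscr U$. The natural colouring is $c\colon [X]^{2n}\to\{0,1\}$ with $c(e)=1$ iff the word $\bigsd e\in C$ for $e=\{x_1,\dots,x_{2n}\}$ — but a $2n$-element subset of $X$ is a single word, not a two-element subset of $B(X)$, so this does not immediately talk to fatness. Instead, following the idea in the proof of Theorem~\ref{B2(X)}(iii) but now in all even dimensions, I would colour $[X]^{2}$ by $c(\{x,y\})=0$ iff $x\sd y\in C$, use the Ramsey property (via Theorem~9.6 of \cite{Comfort-Negrepontis}, which upgrades a Ramsey ultrafilter to homogeneity for colourings of $[X]^{n}$ for all finite $n$, here applied after the reduction) to get $A\in\mathscr U$ homogeneous; the homogeneous colour cannot be $1$ because $\kappa$-fatness of $C$ would be violated by a large $1$-homogeneous set (any $\kappa$-element, even any $3$-element, $1$-homogeneous set contradicts $C$ being a $\Delta^*$-set, which $\kappa$-fat sets are). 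Hence $[A]^2\subset C$, so $\langle A\cup\{*\}\rangle$ — whose trace on $B_{2n}(X_{\mathscr U})$ is exactly $[A\cup\{*\}]^{\le 2n}$, hence on $B_{2n}(X)$ is $[A]^{\le 2n}$ — lies inside $C$ on every $B_{2n}(X)$; taking the union over $n$ and adjoining odd layers freely shows $C$ is a neighborhood of zero. (Here $C$ may need to be enlarged to contain $\zero$ on each odd layer, but $\zero\in C$ by $\kappa$-fatness and the odd traces of a neighborhood may be empty, so this is harmless.)

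The heart of the theorem — and the main obstacle — is (i)~$\Rightarrow$~(iii). Here we only know something about the single layer $B_4(X)$, and we must extract the full Ramsey property of $\mathscr U$, i.e. homogeneity for an arbitrary $c\colon[X]^2\to\{0,1\}$. Given such a $c$, I would build an $N$-fat set $C\subset B(X)$ whose $B_{=2}$-trace records $c$ and whose $B_{=4}$-trace records the induced colouring of $4$-tuples, exactly as in the proof of Theorem~\ref{B2(X)}, and then invoke hypothesis~(i): the trace of $C$ on $B_4(X)$ contains $A\sd A\sd A\sd A$ on the relevant layers for some $A\in\mathscr U$, forcing $[A]^2\subset C$ unless there is an $N$-element $1$-homogeneous $F\subset X$. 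The delicate point is designing $C$ so that it is genuinely $N$-fat: an $N$-element subset $\{w_1,\dots,w_N\}$ of $B(X)$ with all pairwise Boolean sums of length $\le 4$ must, by Lemma~\ref{2-words} (the case $k>4$, which gives the "clean" alternative (a) once $k\ge 5$), be essentially a set of singletons $\{x_1,\dots,x_N\}$, and then the two $R$-iterated applications of Ramsey ($R_{36}$ to handle the $36$ possible "pattern types" of a $4$-tuple of words of pairwise-distance $\le 4$, then $R_6$ down to a $3$-element fully-homogeneous core) are what force a monochromatic triangle in $c$ or a large set in $\mathscr U$; the count $N=R_{36}(R_6(3))+1$ is exactly the bookkeeping of these two nested colourings. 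Getting the colouring on the intermediate $4$-tuples exactly right, so that (i) applied to $C$ yields $\mathscr U$-homogeneity for the original $2$-colouring rather than something weaker, is the step that needs the most care; everything else reduces to the already-proved Theorem~\ref{B2(X)}-type manipulations and the standard properties of Ramsey ultrafilters.
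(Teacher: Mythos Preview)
There are genuine gaps in both nontrivial implications.

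For (iii)~$\Rightarrow$~(ii), your $2$-dimensional colouring is insufficient: from $[A]^2\subset C$ you cannot conclude $[A]^{2n}\subset C$ for $n>1$, since membership of a $2n$-letter word in $C$ is unrelated to which $2$-letter words lie in $C$. You actually had the right colouring at first---colour $[X]^{2n}$ by whether the corresponding word lies in $C$---and your worry that this ``does not talk to fatness'' is unfounded: if $[Y]^{2n}\cap C=\varnothing$ for some $Y$ of size $\kappa$, set $Z=[Y]^n\subset B(X)$; then $(Z\sd Z)\setminus\{\zero\}$ misses $C$, contradicting $\kappa$-fatness. This gives, for each $n$, an $A_n\in\mathscr U$ with $[A_n]^{2n}\subset C$, but you must then assemble these into a single neighborhood: for $\kappa=\omega$ one uses that $B(X_{\mathscr U})$ carries the inductive-limit topology when $\mathscr U$ is Ramsey, and for $\kappa>\omega$ one uses that a uniform Ramsey ultrafilter on an uncountable set is countably complete, so that $\bigcap_n A_n\in\mathscr U$. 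Both points are missing from your sketch.

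For (i)~$\Rightarrow$~(iii), Lemma~\ref{2-words} concerns pairwise sums lying in $B_{=2}(X)$, not in $B_4(X)$; it cannot reduce an $N$-tuple with pairwise sums of length~$4$ to singletons, so your proposed use of it fails. The actual mechanism is different: one takes $C\subset B_{=4}(X)$ to consist of those $\alpha_1\sd\alpha_2\sd\alpha_3\sd\alpha_4$ (with $\alpha_1<\dots<\alpha_4$) for which $c$ disagrees on each of the three pair-partitions of $\{\alpha_1,\dots,\alpha_4\}$, and, assuming $C'=B(X)\setminus C$ is not $N$-fat, translates a witnessing $N$-tuple so that $w_N=\zero$ and hence all $w_i\in B_{=4}$. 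The $36$-colouring records, for each pair $\{w_i,w_j\}$, which two numbered positions in $w_i$ (and in $w_j$) survive in $w_i\sd w_j$; the inner $6$-colouring records where those surviving letters sit inside $w_i\sd w_j$. A homogeneous triple then forces three values of a $\{0,1\}$-valued function to be pairwise distinct, a contradiction. Finally, once $C'$ is $N$-fat and (i) yields $A\in\mathscr U$ with $[A]^4\cap C=\varnothing$, this by itself does not give a $c$-homogeneous element of $\mathscr U$: one fixes $\alpha\in A$, passes to whichever of $\{\beta>\alpha:c(\{\alpha,\beta\})=i\}$ lies in $\mathscr U$, and then invokes that $\mathscr U$ is already $3$-arrow (which follows from (i) via Theorem~\ref{B2(X)}) to finish. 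Incidentally, your hesitation over (ii)~$\Rightarrow$~(i) is misplaced: $N$-fat trivially implies $\kappa$-fat for $\kappa\ge N$, since any $\kappa$-element set contains an $N$-element subset.
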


\begin{proof}
Without loss of generality, we assume that $X=\kappa$.

(i) $\Rightarrow$ (iii)\enspace
Suppose that $N$ is as large as we need and  the trace of each 
$N$-fat set on $B_{4}(\kappa_{\mathscr U})$ contains 
the trace on $B_{4}(\kappa)$ 
of a neighborhood of zero in $B(X_{\mathscr U})$, which, in turn, 
contains a set of the form 
$(A\sd A\sd A\sd A)\cap B_{=4}(\kappa)$
for some $A\in \mathscr U$. Let us show that $\mathscr U$ is a Ramsey ultrafilter. 
Consider any 2-coloring $c\colon [\kappa]^2\to \{0, 1\}$.
We set 
\begin{multline*}
C= \bigl\{\alpha_1\sd \alpha_2\sd \alpha_3\sd \alpha_4: \alpha_i\in \kappa 
\text{ for $i\le 4$},\ \alpha_1<\alpha_2<\alpha_3<\alpha_4,\\  
c(\{\alpha_1, \alpha_2\})\ne c(\{\alpha_3, \alpha_4\}),\ 
c(\{\alpha_1, \alpha_3\})\ne c(\{\alpha_2, \alpha_4\}),\\
c(\{\alpha_1, \alpha_4\})\ne c(\{\alpha_2, \alpha_3\})\bigr\}
\end{multline*}
and 
$$
C'=B(X)\setminus C.
$$
If $C'$ is not $N$-fat, then there exist 
$w_1, \dots, w_N\in B(\kappa)$ such that $w_i\sd w_j\in C$ for $i<j\le N$. We can 
assume that $w_N=\zero$ (otherwise, we translate all $w_i$ by $w_N$). Then 
$w_i\in C\subset B_4(\kappa)$, $i<N$. Let $w_i=\alpha^i_1\sd \alpha^i_2\sd \alpha^i_3 
\sd \alpha^i_4$ for $i<N$ 
and consider the 36-coloring of all pairs $\{w_i, w_j\}$, $i<j<N$,  defined as follows. 
Since $w_i\sd w_j$ is a four-letter word, it follows that $w_i\sd w_j = \beta_1\sd 
\beta_2\sd \beta_3\sd \beta_4$, 
where $\beta_i\in \kappa$. Two letters  among $\beta_1, \beta_2, \beta_3, 
\beta_4$ (say $\beta_1$ and $\beta_2$) 
occur in the word $w_i$ and the remaining two ($\beta_3$ and $\beta_4$) 
occur in $w_j$. We assume 
that $\beta_1<\beta_2$ and $\beta_3< \beta_4$. Let us denote 
the numbers of the letters  $\beta_1$ and $\beta_2$ in $w_i$ 
(recall that the letters in $w_i$ are numbered 
in increasing order) by $i'$ and $i''$, respectively, 
and the numbers of the letters $\beta_3$ and $\beta_4$ in $w_j$ by $j'$ and $j''$. To the pair 
$\{w_i, w_j\}$ we assign the quadruple $(i', i'', j', j'')$. The number of all possible quadruples 
is 36, so that this assignment is a 36-coloring. We choose $N\ge R_{36}(N')+1$ for 
$N'$ as large as we  need. Then there exist two 
pairs $i'_0, i''_0$ and $j'_0, j''_0$ and  $N'$ 
words $w_{i_n}$, where $n\le N'$ and $i_s<i_t$ for $s<t$, 
such that $i'=i'_0$, $i''=i''_0$,  $j'=j'_0$, 
and $j''=j''_0$ for any 
pair $\{w_i, w_j\}$ with $i, j\in \{i_1, \dots, i_{N'}\}$ and $i<j$. Clearly, 
if $N'\ge 3$, then we also have $j'_0=i'_0$ and $j''_0= i''_0$. In the same manner, we 
can fix the position of the letters coming from $w_i$ and $w_j$ in the sum $w_i\sd w_j$: to each 
pair $\{w_{i_s}, w_{i_t}\}$, $s,t\in \{1, \dots, N'\}$, $s<t$, we assign the numbers of the 
$i'_0$th and $i''_0$th letters of $w_{i_s}$ in the word $w_{i_s}\sd w_{i_t}$ (recall that 
the letters of all words are numbered in increasing order); 
the positions of the letters of $w_{i_t}$ in $w_{i_s}\sd w_{i_t}$ are then determined 
automatically. There are six possible arrangements: $1,2$, $1,3$, $1,4$, $2,3$, 
$2,4$, and $3,4$. Thus, we have a 6-coloring of the symmetric square of the 
$N'$-element set $\{w_{i_1}, \dots, w_{i_{N'}}\}$, and if $N'\ge R_6(3)$ (which we assume), 
then there exists 
a 3-element set $\{w_k, w_l, w_m\}$ homogeneous with respect to this coloring, i.e., such that 
all pairs of words from this set are assigned the same color. 
For definiteness, suppose that this is the color $1,2$; suppose also that $i'_0=1$, $i''_0=2$, 
$k<l<m$, 
and $w_t= \alpha^t_1\sd \alpha^t_2\sd \alpha^t_3\sd \alpha^t_4$ for $t= k, l, m$. Then 
$w_k, w_l, w_m\in C$, $w_k\sd  w_l= \alpha^k_1\sd \alpha^k_2\sd \alpha^l_1\sd \alpha^l_2\in C$, 
$w_l\sd  w_m= \alpha^l_1\sd \alpha^l_2\sd \alpha^m_1\sd \alpha^m_2\in C$, and 
$w_k\sd  w_m= \alpha^k_1\sd \alpha^k_2\sd \alpha^m_1\sd \alpha^m_2\in C$. 
By the definition of $C$ we have 
$c(\alpha^k_1\sd \alpha^k_2)\ne c(\alpha^l_1\sd \alpha^l_2)$, 
$c(\alpha^l_1\sd \alpha^l_2)\ne c(\alpha^m_1\sd \alpha^m_2)$, and 
$c(\alpha^k_1\sd \alpha^k_2)\ne c(\alpha^m_1\sd \alpha^m_2)$, which is impossible, because 
$c$ takes only two values. The cases of other colors and other numbers 
$i'_0$ and $i''_0$ are treated in a similar way. 

Thus, $C'$ is $N$-fat and, therefore, contains $(A\sd A\sd A\sd A) \cap B_4(\kappa)$ for some 
$A\in \mathscr U$. Take any $\alpha \in A$ and consider the sets 
$A'=\{\beta>\alpha: c(\{\alpha, \beta\}) = \{0\}\}$ and 
$A''=\{\beta>\alpha: c(\{\alpha, \beta\}) = \{1\}\}$. One of these sets belongs to $\mathscr U$, 
because $\mathscr U$ is uniform. For definiteness, suppose that this is $A'$. By 
Theorem~\ref{B2(X)} $\mathscr U$ is 3-arrow. Therefore, there exists either an $A''\subset A'$ 
for which $c([A'']^2)=\{0\}$ or $\beta, \gamma, \delta\in A'$, $\beta<\gamma<\delta$, for which 
$c([\{\beta, \gamma, \delta\}]^2)=\{1\}$. In the former case, we are done. In the latter case, we 
have $\alpha,\beta,\gamma,\delta\in A$, $\alpha <\beta<\gamma<\delta$, 
$c(\{\beta, \gamma\})= c(\{\gamma,\delta\})=c(\{\beta,\delta\})=1$, and 
$c(\{\alpha, \beta\})=c(\{\alpha, \gamma\})=c(\{\alpha, \delta\})=0$ (by the definition of 
$A'$).  Therefore, $\alpha \sd \beta\sd \gamma\sd \delta\in C$, which contradicts the definition 
of $A$.

(iii) $\Rightarrow$ (ii)\enspace 
Suppose that $\mathscr U$ is a Ramsey ultrafilter on $X$ and $C$ is a $\kappa$-fat set 
in $B(X)$. Take any $n\in \mathbb N$ and consider the coloring 
$c\colon [X]^{2n}\to \{0, 1\}$ defined by
$$
c(\{x_1, \dots, x_{2n}\}) = \begin{cases}
0&\text{if $\{x_1, \dots, x_{2n}\}=x_1\sd  \dots \sd x_{2n}\in C$}, \\ 
1&\text{otherwise}.
\end{cases}
$$
Since $\mathscr U$ is Ramsey, there exists either a set $A_{n}\in \mathscr U$ 
for which $[A]^{2n}\subset C$ 
or a set $Y\subset X$ of cardinality $\kappa$ for which $[Y]^{2n}\cap C =\varnothing$. 
In the latter case, for  $Z=[Y]^n\subset B(X)$, we have 
$(Z\sd Z) \cap C\subset \{\zero\}$, which contradicts $C$ being $\kappa$-fat.    
Hence the former case occurs, and $C\cap B_{2n}(X)$ contains the trace 
$[A_{n}]^{2n} \cap B_{=2n}(X)$ 
of the open subgroup $\langle (A_{n}\cup\{*\})\sd (A_{n}\cup\{*\})\rangle$ 
of $B(X_{\mathscr F})$.

Thus, for each $n\in \mathbb N$, we have found $A_1, A_2, \dots, A_n\in \mathscr F$ such that 
$[A_{i}]^{2i} \cap B_{=2i}(X) \subset C$. Let $A=\bigcap_{i\le n} A_i$. Then $A \in \mathscr U$ 
and $[A]^{2i} \cap B_{=2i}(X) \subset C$ for all $i\le n$. Hence $C\cap B_{2n}(X)$ 
contains the trace on $B_{2n}(X)$ of the open subgroup 
$\langle (A\cup\{*\})\sd (A\cup\{*\})\rangle$ of $B(X_{\mathscr U})$ (recall that 
$\zero\in C$). This means that, for each $n$,  
$C\cap B_{2n}(X)$ is a neighborhood of zero in the topology induced from 
$B(X_{\mathscr U})$.

If $\kappa =\omega$, then $B(X_{\mathscr U})$ has the inductive limit topology with respect 
to the decomposition $B(X_{\mathscr U})=\bigcup_{n\in \omega}B_n(X_{\mathscr F})$, because 
$\mathscr F$ is Ramsey (see \cite{axioms}). 
Therefore, in this case, $C\cap B(X)$ is a neighborhood of zero in the induced topology. 

If $\kappa>\omega$, then the ultrafilter $\mathscr U$ is countably complete 
\cite[Lemma~9.5 and Theorem~9.6]{Comfort-Negrepontis}, i.e., any countable intersection 
of elements of $\mathscr U$ belongs to $\mathscr U$. Hence 
$A=\bigcap_{n\in \mathbb N} A_n\in \mathscr U$, 
and $\langle (A\cup\{*\})\sd (A\cup\{*\})\rangle \cap 
\Bigl(\bigcup_{n\in \omega}B_{2n}(X)\Bigr)\subset C$. Thus, 
$C\cap B(X)$ is a neighborhood of zero in the induced topology in this case, too. 

 The implication (ii)~$\Rightarrow$~(i) is obvious. 
\end{proof}

Theorem~\ref{B2(X)} has the following purely algebraic corollary. 

\begin{corollary} 
[$\mathfrak p=\mathfrak c$]
Any Boolean group contains $\omega$-fat sets 
which are  not fat and $\Delta^*$-sets which are $\Delta^*_k$-sets for no $k$. 
\end{corollary}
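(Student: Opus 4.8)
The plan is to exhibit, inside an arbitrary infinite Boolean group $G$, a single $\omega$-fat set $C$ with $\zero\in C$ that is not $m$-fat for any $m\in\mathbb N$; the $\Delta^*$-assertion then drops out by passing to $C\setminus\{\zero\}$ and invoking the remark of Section~\ref{ss2} that in a Boolean group a set is $\kappa$-fat precisely when it contains $\zero$ and is a $\Delta^*_\kappa$-set. First I would fix a basis $X$ of $G$, so that $G=B(X)$, and a countably infinite subset $Y\subseteq X$, so that $B(Y)$ is a subgroup of $G$. Under $\mathfrak p=\mathfrak c$, by the Baumgartner--Taylor theorem (see \cite{Baumgartner-Taylor}) there is an ultrafilter $\mathscr U$ on $Y$ that is $k$-arrow for every $k\in\mathbb N$ but is not Ramsey, and hence not $\omega$-arrow. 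Since $\mathscr U$ is therefore not an $\omega$-arrow ultrafilter, Theorem~\ref{B2(X)}(iii) yields an $\omega$-fat set $C_0\subseteq B(Y)$ whose trace on $B_2(Y)$ contains the trace on $B_2(Y)$ of no neighborhood of zero in the free group topology of $B(Y_{\mathscr U})$.

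Next I would lift $C_0$ to $G$ by setting $C=C_0\cup\bigl(B(X)\setminus B(Y)\bigr)$; note that $\zero\in C_0\subseteq C$ and that $C\cap B(Y)=C_0$. To see that $C$ is $\omega$-fat, take any countably infinite $S\subseteq B(X)$. If $s\sd t\notin B(Y)$ for some distinct $s,t\in S$, then $\{s,t\}^{-1}\{s,t\}=\{\zero, s\sd t\}\subseteq C$ and we are done. Otherwise $s_0\sd s\in B(Y)$ for all $s\in S$ (with $s_0\in S$ fixed), so $S=s_0\sd S'$ with $S'=s_0\sd S\subseteq B(Y)$ countably infinite; the $\omega$-fatness of $C_0$ in $B(Y)$ gives distinct $d_1,d_2\in S'$ with $d_1\sd d_2\in C_0$, whence $\{s_0\sd d_1, s_0\sd d_2\}\subseteq S$ and $(s_0\sd d_1)\sd(s_0\sd d_2)=d_1\sd d_2\in C_0\subseteq C$. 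To see that $C$ is not fat, suppose it were $m$-fat for some $m\in\mathbb N$; since a set that is $m$-fat is also $m'$-fat for every $m'\ge m$, we may assume $m\ge 5$, so $m\ne 4$. Because $C\cap B(Y)=C_0$ and the Boolean sum of two elements of $B(Y)$ lies in $B(Y)$, applying $m$-fatness of $C$ to $m$-element subsets of $B(Y)$ shows that $C_0$ is $m$-fat in $B(Y)$; but $\mathscr U$ is $m$-arrow, so Theorem~\ref{B2(X)}(i) (with $k=m\ne 4$) forces the trace of $C_0$ on $B_2(Y)$ to contain that of a neighborhood of zero in $B(Y_{\mathscr U})$, contradicting the choice of $C_0$. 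Hence $C$ is $\omega$-fat but not fat in $G$.

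Finally, put $A=C\setminus\{\zero\}$, so $A\cup\{\zero\}=C$. By the remark of Section~\ref{ss2}, since $C$ is $\omega$-fat it is a $\Delta^*_\omega$-set, i.e.\ a $\Delta^*$-set, and so is $A$ (removing $\zero$ does not affect the $\Delta^*$-property, as $\zero\notin\Delta_I(S)$ for any one-to-one sequence $S$); and if $A$ were a $\Delta^*_k$-set for some finite $k$, then $A\cup\{\zero\}=C$ would be $k$-fat, which it is not. Thus $A$ is a $\Delta^*$-set that is a $\Delta^*_k$-set for no finite $k$, completing the proof. I expect the only genuine work to be the lifting step, where one must check simultaneously that $C$ is globally $\omega$-fat and that its restriction to $B(Y)$ still ``sees'' the defective trace of $C_0$, which forces one to dispose of the coset alternative by hand; everything else is a direct appeal to Theorem~\ref{B2(X)}, to the Baumgartner--Taylor existence result, and to the Boolean identification of $\kappa$-fatness with ``$\zero\in\,\cdot\,$ together with $\Delta^*_\kappa$''. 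One point to keep straight in the statement itself is that ``$\Delta^*_k$-set for no $k$'' must be read with $k$ ranging over the \emph{finite} cardinals, since every $\Delta^*$-set is tautologically a $\Delta^*_\omega$-set.
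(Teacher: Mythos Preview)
Your proof is correct and follows essentially the same route as the paper: invoke Baumgartner--Taylor to obtain an ultrafilter that is $k$-arrow for every finite $k$ but not $\omega$-arrow, then use the two directions of Theorem~\ref{B2(X)} to produce an $\omega$-fat set in the countable Boolean group whose trace on $B_2$ cannot contain that of a neighborhood of zero, hence cannot be fat. The only difference is cosmetic, in the lifting to an arbitrary Boolean group: the paper writes $B(\kappa)\cong B(\omega)\times B(\kappa)$ and takes the set $C\times B(\kappa)$, observing that $C\times B(\kappa)$ is $\lambda$-fat for $\lambda\le\omega$ iff $C$ is; you instead take $C_0\cup(B(X)\setminus B(Y))$ and verify $\omega$-fatness and non-fatness directly. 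These two lifts produce different sets but are equally valid, and your version has the small advantage of treating the countable and uncountable cases uniformly (the countable case is $Y=X$). Your explicit derivation of the $\Delta^*$ statement from the fatness statement via the remark in Section~\ref{ss2} is also a detail the paper leaves implicit.
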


\begin{proof}
Theorem~4.10 of \cite{Baumgartner-Taylor} asserts that 
if $\mathfrak p=\mathfrak c$, then there exists an ultrafilter $\mathscr U$ 
on $\omega$ which is $k$-arrow for all 
$k\in \mathbb N$ but not Ramsey and, therefore, not $\omega$-arrow 
\cite[Theorem~2.1]{Baumgartner-Taylor}. By Theorem~\ref{B2(X)} the traces of all fat sets on 
$B_2(\omega)$ contain those of neighborhoods of zero in $B(\omega_{\mathscr U})$, and there exist 
$\omega$-fat sets whose traces do not. This proves the required assertion for 
the countable Boolean group. The case of a group $B(X)$ of uncountable cardinality $\kappa$ reduces 
to the countable case by representing $B(X)$ as $B(\kappa)=B(\omega)\times B(\kappa)$; it suffices 
to note that a set of the form $C\times B(\kappa)$, where $C\subset B(\omega)$, is $\lambda$-fat in 
$B(\omega)\times B(\kappa)$ for $\lambda\le \omega$ if and only if so is $C$ in $B(\omega)$. 
\end{proof}

The author is unaware of where there exist ZFC examples of such sets in any groups. 

\section*{Acknowledgments}

The author is very grateful to Evgenii Reznichenko and Anton Klyachko for useful discussions.

\end{document}